\newcommand{\HH}{\mathcal{H}}
\def\V{\mathcal{V}}
\newcommand{\R}{\mathbb{R}}
\newcommand{\Z}{\mathbb{Z}}
\newcommand{\N}{\mathbb{N}}
\newcommand{\CC}{\mathbb{C}}
\newcommand{\la}{\langle}
\newcommand{\ra}{\rangle}
\newcommand{\G}{ G}
\newcommand{\wi}{\widetilde}
\newcommand{\Om}{\Omega}
\def\supp{{\rm supp\,}}
\begin{document}

\title*{Dynamical sampling and systems from  iterative actions of operators}
\author{Akram Aldroubi and Armenak Petrosyan}
\institute{Akram Aldroubi \at Vanderbilt University, Nashville, TN 37240, USA \email{akram.aldroubi@vanderbilt.edu}
\and Armenak Petrosyan \at Vanderbilt University, Nashville, TN 37240, USA \email{armenak.petrosyan@vanderbilt.edu}}
%
%
\maketitle

\abstract*{Each chapter should be preceded by an abstract (10--15 lines long) that summarizes the content. The abstract will appear \textit{online} at \url{www.SpringerLink.com} and be available with unrestricted access. This allows unregistered users to read the abstract as a teaser for the complete chapter. As a general rule the abstracts will not appear in the printed version of your book unless it is the style of your particular book or that of the series to which your book belongs.
Please use the 'starred' version of the new Springer \texttt{abstract} command for typesetting the text of the online abstracts (cf. source file of this chapter template \texttt{abstract}) and include them with the source files of your manuscript. Use the plain \texttt{abstract} command if the abstract is also to appear in the printed version of the book.}

\abstract{In this chapter, we review some of the recent developments and prove new results concerning frames and Bessel systems generated by iterations of the form $\{A^ng: g\in \G,\, n=0,1,2,\dots \}$, where $A$ is a bounded linear operator on a separable complex Hilbert space $ \HH $ and $\G$ is a countable set of vectors in $ \HH $. The system of iterations  mentioned above was motivated from the so called {\em dynamical sampling problem}. In dynamical sampling, an unknown function $f$ and its future states $A^nf$ are coarsely sampled at each time level $n$, $0\le n< L$, where $A$ is an evolution operator that drives the system. The goal  is to recover $f$ from these space-time samples. }

\section{Introduction}

The typical dynamical sampling problem is finding spatial positions $X=\{x_i \in \R^d: i\in I\}$ that allow the  reconstruction of an unknown function $f \in \HH \subset L^2\left(\R^d\right)$ from samples of the function at spatial positions $x_i\in X$ and subsequent samples of the functions $A^nf$, $n=0,\cdots,L$, where $A$ is an evolution operator and $n$ represents time. For example, $f$ can be the temperature at time $n=0$, $A$ the heat evolution operator, and $A^nf$ the temperature at time $n$. The problem is then to find spatial sampling   positions $X\subset \R^d$, and end time $L$, that allow the determination of the initial temperature $f$ from samples $\{f|_X, (Af)|_X,\cdots,(A^Lf)|_X\}$. For the heat evolution operator, the  problem has been considered by Vetterli and Lu \cite {LDV11,LV09} and inspired our  research in dynamical sampling, see e.g., \cite {ACCMP15,ACMT14,ADK13,ADK15}.

\subsection{The dynamical sampling problem}
\label {DSP}
		
		Let $\HH$ be a separable (complex) Hilbert space,
		$f\in \HH$ be an unknown vector 
		and  $f_n\in \HH$ be the state of the system at time $n$.		 We assume  
			$$f_0=f,\;\;f_n=Af_{n-1}=A^nf$$
		where $A$ is a known bounded operator on $\HH$.
		Given the measurements
			\begin{equation} \label{samples}
			\la A^nf,g \ra \;\text{for}\; \;0\leq n<L(g), \; \;g\in \G
			\end{equation}		
		where $\G$ is a countable subset of $\HH$ and $L:\G\mapsto \N\cup \{\infty\}$ is a function, the dynamical sampling problem is to recover the vector $f\in \HH$ from the measurements \eqref{samples}. 	It is important that the recovery of $f$ be robust to noise. Thus, we also require that  the sampling data allow the recovery of $f$ in a stable way. Equivalently, for any $f\in \HH$, the samples must satisfy the stability condition
			$$C_1\|f\|^2\leq\sum_{g \in \G}\sum_{n=0}^{L(g)}|<A^nf,g>|^2\le C_2\|f\|^2,$$ 
for some $C_1,C_2>0$ absolute constants.

A related problem for band-limited signals in $\R^2$   (i.e., the Paley Wiener spaces $PW_\sigma$) with time varying sampling locations corresponding to trajectories  but time-independent function  can be found in  \cite {GRUV15}.

\subsection{Dynamical sampling for diagonalizable operators in $l^2(\N)$}
When the Hilbert space is $\HH=l^2(\N)$, and when the operator $A$ is equivalent to a diagonal matrix $D$, i.e., $A^\ast=B^{-1}DB$ where $D=\sum_j\lambda_j P_j$ is an infinite diagonal matrix, then a characterization of the set of sampling $I \subset \N$ such that any $f \in \HH$ can be recovered from the data $Y=\{f(i), (Af)(i),\cdots,(A^{l_i}f)(i): \, i \in I \}$ is obtained in  \cite {ADK15}. 
 
The results are stated in terms of   vectors $b_i$ that are the columns of $B$ corresponding to the sampling positions $i \in I$,  the projections  $P_j$  that are diagonal infinite matrices whose non-zero diagonals are all ones and correspond to the  projection on the eigenspace of $D$ associated to $\lambda_j$, and the smallest integers $l_i$ such that the sets $\{b_i, Db_i,\dots,D^{l_i}b_i\}$ are minimal \cite {ACMT14}.

\begin {theorem}\label {TAD1}

Let $A^\ast=B^{-1}DB$, and  let $\{b_i: i\in I  \}$ be the column vectors of $B$ whose indices belong to $I$. Let $l_i$ be the smallest integers such that the set $\{b_i, Db_i,\dots,D^{l_i}b_i\}$ is minimal. Then  any vector $f \in l^2(\N)$ can be recovered from  the  samples 
$$Y=\{f(i), Af(i), \dots, A^{l_i}f(i): i \in I\}$$  if and only if for each $j$, $\{P_j(b_i):i \in I\}$  is complete in the range $E_j$ of $P_j$. 
\end{theorem}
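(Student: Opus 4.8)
The plan is to recast recoverability as a completeness (total set) condition on the analysis vectors, push the invertible change of basis $B$ out of the way, and then exploit the orthogonality of the eigenspaces $E_j$ together with a Vandermonde argument governed by the integers $l_i$.

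First I would observe that a sample has the form $A^nf(i)=\la A^nf,e_i\ra=\la f,(A^\ast)^n e_i\ra$, where $e_i$ is the $i$-th standard basis vector, and that every $f\in\HH$ is determined by the data $Y$ precisely when the analysis set $\{(A^\ast)^n e_i:i\in I,\ 0\le n\le l_i\}$ is complete in $\HH$ (its orthogonal complement is trivial, i.e. the sampling map is injective). Using $A^\ast=B^{-1}DB$ and $Be_i=b_i$ I get $(A^\ast)^n e_i=B^{-1}D^n b_i$. Since $B^{-1}$ is bounded and invertible it is a homeomorphism of $\HH$, so $\overline{\span}\{B^{-1}D^n b_i\}=B^{-1}\,\overline{\span}\{D^n b_i\}$; hence the analysis set is complete if and only if $\{D^n b_i:i\in I,\ 0\le n\le l_i\}$ is complete. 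This removes $B$ entirely and reduces the problem to the diagonal operator $D$.

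The heart of the argument is a per-vector spanning identity. Writing $b_i=\sum_j P_j b_i$ and $D^n b_i=\sum_j \lambda_j^n P_j b_i$, let $J_i=\{j:P_j b_i\neq 0\}$ and let $W_i=\overline{\span}\{D^n b_i:n\ge 0\}$ be the cyclic subspace generated by $b_i$. Because the $\lambda_j$ are distinct, $\{P_j b_i:j\in J_i\}$ is a basis of $W_i$ and $\dim W_i=|J_i|$, and the condition defining $l_i$ forces $l_i+1=|J_i|$, i.e. $\{b_i,Db_i,\dots,D^{l_i}b_i\}$ is a minimal (linearly independent) generating set of $W_i$. The same Vandermonde structure shows that the $l_i+1$ vectors $D^n b_i$, $0\le n\le l_i$, and the $|J_i|$ vectors $\{P_j b_i:j\in J_i\}$ have the same linear span: each $P_j b_i$ is recovered from the $D^n b_i$ by inverting the square nonsingular Vandermonde matrix $(\lambda_j^n)$, and conversely. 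Thus $\span\{D^n b_i:0\le n\le l_i\}=\span\{P_j b_i:j\}$ for each fixed $i$, and taking the union over $i\in I$ gives that $\{D^n b_i:i\in I,\ 0\le n\le l_i\}$ and $\{P_j b_i:i\in I,\ j\}$ have the same closed span.

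Finally I would split by eigenspace. Since the $E_j$ are mutually orthogonal and $P_j b_i\in E_j$, the closed span of $\{P_j b_i:i\in I,\ j\}$ is the orthogonal direct sum $\bigoplus_j \overline{\span}\{P_j b_i:i\in I\}$, which equals all of $\HH=\bigoplus_j E_j$ if and only if, for each $j$, $\{P_j b_i:i\in I\}$ is complete in $E_j$. Chaining the equivalences yields the theorem. I expect the main obstacle to be the third step: making the identification $l_i+1=|J_i|$ and the two-sided Vandermonde span equality fully rigorous, including the case $|J_i|=\infty$ (so $l_i=\infty$), where one argues on finite truncations, and verifying that the components $D^n b_i$ with $n>l_i$ genuinely add nothing to the span.
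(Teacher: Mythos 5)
Your reduction steps are sound: recovery is equivalent to completeness of $\{(A^*)^ne_i\}=\{B^{-1}D^nb_i\}$, a bounded invertible $B^{-1}$ preserves completeness, and the orthogonal splitting $l^2(\N)=\bigoplus_j E_j$ correctly converts completeness of the full family $\{P_jb_i\}$ into the per-$j$ condition. Note also that the inclusion $\span\{D^nb_i: 0\le n\le l_i\}\subseteq \overline{\span}\{P_jb_i : j\}$ always holds, so your chain does establish the necessity direction, and the Vandermonde identities are correct whenever $l_i<\infty$. (For context: the chapter states this theorem without proof, importing it from \cite{ACMT14,ADK15}, so the comparison here is against the underlying mathematics rather than a written-out argument.) The genuine gap is precisely the step you flagged and then deferred to "finite truncations": when $|J_i|=\infty$ (so $l_i=\infty$), the reverse inclusion $P_jb_i\in \overline{\span}\{D^nb_i: n\ge 0\}$ is not merely hard to verify — it is false in general, and no truncation argument can repair it, because the errors in the finite Vandermonde inversions need not tend to zero.

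Concretely, a classical example of J. Wolff (1921) produces distinct points $\mu_j$ in the open unit disk $\DD$ and nonzero coefficients $c_j$ with $\sum_j|c_j|<\infty$ such that $\sum_j c_j\mu_j^n=0$ for all $n\ge 0$. Take $D={\rm diag}(\bar\mu_j)$, $b$ with entries $b_j=|c_j|^{1/2}$, and $f$ with entries $f_j=c_j/|c_j|^{1/2}$; then $b,f\in l^2(\N)$, $f\ne 0$, every $P_jb\ne 0$, yet $\la f,D^nb\ra=\sum_j c_j\mu_j^n=0$ for all $n\ge0$. Thus the cyclic subspace $W=\overline{\span}\{D^nb:n\ge0\}$ is proper although all coordinate projections of $b$ are nonzero; in particular $\{P_jb\}$ is not a basis of $W$ — it is not even contained in $W$, so your claims $\dim W_i=|J_i|$ "with basis $\{P_jb_i\}$" and the two-sided span equality break down exactly here. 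Embedding $b$ as a column of a bounded invertible $B$ (say $Be_1=b$, $Be_k=e_k$ for $k\ge2$) even turns this into a counterexample to the sufficiency direction with $I=\{1\}$ and $l_1=\infty$, which shows the theorem as literally stated needs a further hypothesis in that regime. This is not an artifact of your write-up: it is the same phenomenon that forces the reductivity hypothesis in Theorem \ref{mainthm} of this chapter — a diagonal normal operator with complex spectrum need not be reductive, and then pointwise completeness does not imply completeness of the iterates. A correct proof of sufficiency must therefore either restrict to the situation where the relevant $l_i$ are finite, or invoke a spectral hypothesis (real eigenvalues, i.e.\ self-adjoint $D$, or more generally reductivity), under which $P_jb_i\in\overline{\span}\{D^nb_i:n\ge0\}$ is proved by uniformly approximating $\chi_{\{\lambda_j\}}$ on the spectrum by polynomials (possible on compact subsets of $\R$, impossible in Wolff's configuration) — an argument of a genuinely different nature from Vandermonde inversion.
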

Although Theorem \ref {TAD1} characterizes the sets $I\subset \N$ such that  recovery of any $f \in l^2(\N)$ is possible, it does not provide conditions for stable recovery, i.e., recovery that is robust to noise. Results on the stable recovery are obtained for the case when $I$ is finite \cite {ACMT14}. Stable recovery is also  obtained when $\HH=l^2(\Z)$, $A$ is a convolution operator, and $I$ is a union of  uniform grids  \cite {ADK15}.  For shift-invariant spaces, and union of uniform grids, stable recovery results can be found in  \cite {AADP13}. Obviously, recovery and stable recovery are equivalent in finite dimensional spaces \cite {ADK13}. In \cite{AP15} the case when the locations of the sampling positions are allowed to change is considered. 

\subsubsection {Connections with other fields and applications} 

The dynamical sampling problem has similarity with wavelets  \cite {BJ02,CM11,D92,HW96,M98,OS04,Pes13,SN96}. In dynamical sampling an  operator $A$ is
applied iteratively to the function $f$ producing the functions $f_n=A^nf$. $f_n$ is then, typically, sampled coarsely at each level $n$. Thus, $f$ cannot be recovered from samples at any  single time-level. But, similarly to the wavelet transform, the combined data at all time levels is required to reproduce $f$. 
However, unlike the wavelet transform,  there is a single operator $A$ instead of two complementary operators $L$ (the lowpass operator) and $H$ (the high pass operator). Moreover, $A$ is imposed by the constraints  of the problem, rather than designed, as in the case of $L$ and $H$ in wavelet theory. Finally, in dynamical sampling,  the spatial-sampling grids is not required to be regular.

In inverse problems, given an operator $B$ that represents a physical process,
the goal is to recover a function $f$ from the observation  $Bf$. Deconvolution or debluring  are prototypical examples. When
$B$ is not bounded below, the problem is considered  ill-posed  (see e.g., \cite {N11}).  The dynamical
sampling problem can be viewed as an inverse problem when the operator $B$ is the
result of applying the operators $S_{X_0}, S_{X_1}A, S_{X_2}A^2,\dots, S_{X_L}A^L$, where $S_{X_l}$ is the sampling operator at time $l$ on the set $X_l$, i.e., $B_X=[S_{X_0}, S_{X_1}A, S_{X_2}A^2,\dots, S_{X_L}A^L]^T$. However, unlike  the typical inverse problem, in dynamical sampling  the goal is to find conditions on $L$, $\{X_i : \, i=0,\dots,L\}$, and $A$, such that $B_X$ is injective, well conditioned, etc.

The dynamical sampling problem has connections and applications to other areas of mathematics including, Banach algebras, $C^*$-algebras, spectral theory of normal operators, and frame theory  \cite {ABK08,CKL08, Con96, FS10, G04,GL04,Pes15,S08ACM}. 
 
Dynamical sampling has potential applications in plenacoustic sampling, on-chip sensing,  data center temperature sensing, neuron-imaging, and satellite  remote sensing, and more generally to   Wireless Sensor Networks (WSN).   In wireless sensor networks, measurement  devices are distributed to gather information about a physical quantity 
to be monitored, such as temperature, pressure, or pollution  \cite{HRLV10,LDV11,RCLV11,LV09,RMG12}. The  goal  
 is to exploit the evolutionary structure and the placement of sensors to reconstruct an unknown  field. When it is not possible to place sampling devices at the desired locations (e.g., when there are not enough devices), then the desired information field can be recovered by placing the sensors elsewhere and taking advantage of the evolution process to recover the signals at the relevant locations. Even when the placement of sensors is not constrained,  if the cost of a
sensor is expensive relative to the cost of activating the sensor, then
the relevant information may be recovered with fewer sensors placed judiciously and activated  frequently. Super resolution is another applications when a  evolutionary process acts on the signal of interest.

\subsection{Contribution} In this chapter, we further develop the case of iterative systems generated by the iterative actions of normal operators which was studied in \cite {ACCMP15,ACMT14}. This is done in Section \ref {ANOII}. In Section  \ref  {NRGBO} we study the case of general  iterative systems generated by the iterative actions of  operators that are not necessarily normal.

		\begin{figure}[t]
			\sidecaption
			\includegraphics[scale=.45]{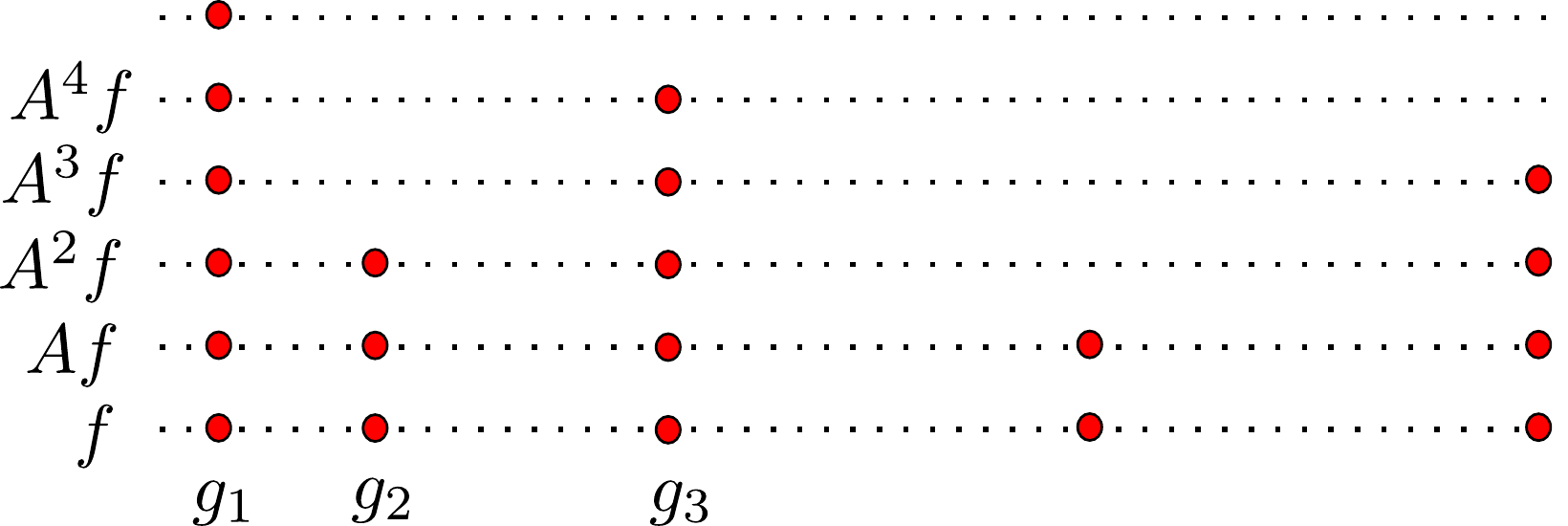}
			%
			%
			\caption{The dynamical sampling procedure when the sampling happens at different time levels.}
			\label{fig:1}       
		\end{figure}

\section{Frame and Bessel properties of systems from iterative actions of operators}
In this section we review some results from \cite {ACCMP15, ACMT14} on the iterative actions of normal operators, prove some new results for this case and generalize  several results to the case where the operators are not necessary normal.   

\subsection{Equivalent formulation of the dynamical sampling problem}
Using the fact that $\la Af,g\ra=\la f,A^*g\ra$, we get the following equivalent formulation of the dynamical sampling problem described in Section \ref {DSP}.

 \begin{proposition}
\begin{enumerate}
\item Any $f\in \HH$ can be recovered from  $ \left\{\la A^nf,g \ra \right\}_{g\in \G, 0\le n < L(g)} $ if and only if the system $ \{(A^*)^ng\}_{g\in \G, 0\le n < L(g)} $ is complete in   $ \HH $.
\item Any $f\in \HH$ can be   recovered from  $ \left\{\la A^nf,g \ra \right\}_{g\in \G, 0\le n < L(g)} $ in a stable way if and only if the system $ \{(A^*)^ng\}_{g\in \G, 0\le n < L(g)} $ is a frame in $ \HH $.
\end{enumerate}
 \end{proposition}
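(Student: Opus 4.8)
The plan is to reduce both statements to elementary facts about the data map by exploiting the single algebraic identity $\la A^nf,g\ra=\la f,(A^*)^ng\ra$, which is just the definition of the adjoint. Writing $\Phi=\{(A^*)^ng:\,g\in\G,\ 0\le n<L(g)\}$ and defining the analysis map $T$ by $Tf=\left(\la f,\phi\ra\right)_{\phi\in\Phi}$, this identity says precisely that the measurement data $\{\la A^nf,g\ra\}$ coincide with $Tf$. Both recovery and stable recovery of $f$ thus become questions about the injectivity, respectively the bounded invertibility, of $T$, so the whole argument is a translation of the sampling language into frame-theoretic language.

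For the first part I would argue that $f$ is recoverable from the data exactly when the data determine $f$ uniquely, i.e.\ when $T$ is injective. The kernel of $T$ is
$$\ker T=\{f\in\HH:\ \la f,\phi\ra=0\ \text{for all }\phi\in\Phi\}=\Phi^{\perp},$$
so $T$ is injective if and only if $\Phi^{\perp}=\{0\}$. By the standard duality between a closed subspace and its orthogonal complement, this is equivalent to the closed linear span of $\Phi$ being all of $\HH$, that is, to completeness of $\Phi$. This establishes statement (1).

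For the second part I would simply substitute the adjoint identity into the stability condition already displayed in Section~\ref{DSP}. Replacing each $\la A^nf,g\ra$ by $\la f,(A^*)^ng\ra$ turns the middle term into $\sum_{\phi\in\Phi}|\la f,\phi\ra|^2$, so the stability inequalities become
$$C_1\|f\|^2\le\sum_{\phi\in\Phi}|\la f,\phi\ra|^2\le C_2\|f\|^2,\qquad f\in\HH,$$
with uniform constants $C_1,C_2>0$. This two-sided bound is verbatim the definition of $\Phi$ being a frame for $\HH$, and the implication runs in both directions, so stable recovery holds if and only if $\Phi$ is a frame, which is statement (2).

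There is essentially no analytic obstacle here; the entire content lies in correctly fixing the informal notions. The one point requiring care is to pin down the definitions before invoking them: \emph{recovery} should be read as injectivity of the data map (uniqueness of $f$ given the samples), and \emph{stable recovery} as the two-sided frame bound displayed above, with constants independent of $f$. Once these definitions are made explicit, both equivalences follow immediately from the adjoint identity together with the elementary characterizations of completeness (trivial annihilator) and of a frame (uniform upper and lower square-sum bounds).
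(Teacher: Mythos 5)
Your proposal is correct and follows exactly the paper's route: the paper also derives both equivalences directly from the adjoint identity $\la A^nf,g\ra=\la f,(A^*)^ng\ra$, identifying recovery with injectivity of the analysis map (hence completeness of $\{(A^*)^ng\}$) and stable recovery with the two-sided stability inequality (hence the frame condition). Your write-up merely makes explicit the definitional bookkeeping that the paper leaves implicit, which is a fair way to present the same argument.
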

 Because of this equivalence, we drop the $*$ and we investigate systems of iterations of the form $ \{A^ng\}_{g\in \G, 0\le n < L(g)} $, where $A$ is a bounded operator on the Hilbert space $\HH$, $\G$ is a subset of  $\HH$, and $L$ is a function from $G$ to the extended set  of integers $\N^*=\N\cup \{\infty\}$.  The goal is then to find conditions on $A$, $\G$ and $L$ so that $ \{A^ng\}_{g\in \G, 0\le n < L(g)} $ is complete, Bessel, a basis, Riesz basis, frame, etc. In the remainder of this chapter, we only study the case where $L(g)=\infty$, for each $g \in \G$.

\subsection{Normal operators}
\label {NO}
Theorem \ref {TAD1} as well as most of the results in \cite {ACMT14} have been generalized to the case of normal operators in general Hilbert spaces \cite{ACCMP15}, and new results have been obtained.  The work relied on the spectral theorem of normal operators in Hilbert spaces (see e.g., \cite{Con96} Ch. IX, theorem 10.16, and \cite{conway1}). Since we will use this theorem again in this work, we state a version of this landmark theorem and give an example to clarify its meaning. In essence, the spectral theorem of normal operators is a way of diagonalizing any normal operator in a separable complex Hilbert space. Using an appropriate unitary transformation $U$,  a normal operator $A$ is equivalent to a multiplication operator $UAU^{-1}\tilde f=N_{\mu}\tilde f=z\tilde f$ where $\tilde f$  is a vector valued function on $\CC$, and $N_{\mu}\tilde f(z)=z\tilde f(z)$ for every $z\in \CC$. Specifically,

	\begin{theorem} [Spectral theorem with multiplicity]\label{decomp} For any normal operator $A$ on $\HH$ there are mutually singular compactly supported Borel measures  $\mu_j,\;1\leq j\leq\infty$ on $\CC$, such that $A$ is  equivalent to the operator
		$$ N^{(\infty)}_{\mu_\infty}\oplus N_{\mu_1}\oplus N^{(2)}_{\mu_2}\oplus\cdots$$
		i.e. there exists a unitary operator $U$  $$U:\HH\rightarrow\widetilde {\HH}=(L^2(\mu_\infty))^{(\infty)}\oplus L^2(\mu_1)\oplus (L^2(\mu_2))^{(2)}\oplus\cdots$$ such that	
		\begin{equation} \label{repres}
		UAU^{-1}=N_\mu=N^{(\infty)}_{\mu_\infty}\oplus N_{\mu_1}\oplus N^{(2)}_{\mu_2}\oplus\cdots.
		\end{equation}	
		Moreover, if $M$ is another normal operator with corresponding measures $\nu_\infty,\nu_1,\nu_2,\dots$ then $M$ is  equivalent to $A$ if and only if  $[\nu_j]=[\mu_j],\;j=1,\dots,\infty$ (are mutually absolutely continuous).
	\end{theorem}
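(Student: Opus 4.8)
The plan is to build the decomposition in three stages --- pass to the cyclic case, decompose $\HH$ into a countable orthogonal sum of cyclic subspaces, and then reorganize the resulting scalar measures by multiplicity --- and to treat the uniqueness assertion separately at the end. I would take as a starting point the Borel functional calculus for $A$, equivalently the projection-valued spectral measure $E$ on $\sigma(A)$ with $A=\int z\,dE(z)$; since $A$ is bounded, $\sigma(A)\subset\CC$ is compact, so every measure produced below is automatically compactly supported.

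First I would handle the cyclic case. For a unit vector $x$, define the scalar measure $\mu_x(\omega)=\la E(\omega)x,x\ra$ and let $\HH_x$ be the smallest closed subspace containing $x$ and reducing $A$ (equivalently, the closure of $\{\varphi(A)x:\varphi\text{ bounded Borel}\}$). The map $\varphi(A)x\mapsto\varphi$ extends to a unitary $U_x:\HH_x\to L^2(\mu_x)$ intertwining $A|_{\HH_x}$ with $N_{\mu_x}$, via the standard identity $\|\varphi(A)x\|^2=\int|\varphi|^2\,d\mu_x$. Because $\HH$ is separable, a Zorn's-lemma argument produces an at most countable family of unit vectors $\{x_k\}$ whose cyclic subspaces $\HH_{x_k}$ are mutually orthogonal and together span $\HH$, giving a first (non-canonical) model $\HH\cong\bigoplus_k L^2(\mu_k)$ on which $A$ acts as multiplication by $z$ in each summand.

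The heart of the argument is converting this into the canonical multiplicity form with mutually singular measures. Here I would not take the $x_k$ arbitrarily, but arrange the chain so that $\mu_1\gg\mu_2\gg\cdots$ decreases in absolute continuity, which is possible by repeatedly choosing a vector of maximal spectral type in the orthogonal complement of what has already been used. Writing $\mu_k=h_k\,\mu_1$ by Radon--Nikodym and setting $\Delta_k=\{h_k>0\}$ (well defined up to $\mu_1$-null sets) gives a decreasing sequence of Borel sets $\Delta_1\supseteq\Delta_2\supseteq\cdots$, and the number of indices $k$ with $z\in\Delta_k$ is the multiplicity at $z$. Restricting $\mu_1$ to $\Delta_n\setminus\Delta_{n+1}$ (and to $\bigcap_k\Delta_k$ for the infinite part) yields mutually singular measures, and regrouping the direct sum over these sets rewrites $\bigoplus_k L^2(\mu_k)$ as $N^{(\infty)}_{\mu_\infty}\oplus N_{\mu_1}\oplus N^{(2)}_{\mu_2}\oplus\cdots$, which is exactly \eqref{repres}.

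For the final equivalence claim, the sufficiency direction is routine: if $[\nu_j]=[\mu_j]$, then multiplication by the square roots of the densities $d\nu_j/d\mu_j$ gives unitaries $L^2(\mu_j)\to L^2(\nu_j)$ commuting with multiplication by $z$, so the two models are unitarily equivalent. The necessity direction is the main obstacle: one must show the family $\{[\mu_j]\}$ is a \emph{complete} unitary invariant. I would prove this by checking that the equivalence class of the scalar spectral measure and the multiplicity function are preserved by any unitary $W$ intertwining the two normal operators --- such a $W$ conjugates one spectral measure into the other, hence carries spectral subspaces to spectral subspaces and must match their dimensions $\mu$-almost everywhere. Making ``dimension almost everywhere'' precise (through a measurable field of Hilbert spaces, or by comparing the ranks of $E(\omega)$ against a countable generating algebra) is the delicate point, and is where I expect the real work to lie; everything before it is bookkeeping with Radon--Nikodym derivatives and Hahn decompositions.
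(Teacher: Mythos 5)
The paper itself contains no proof of Theorem~\ref{decomp}: it is quoted as a classical result (Conway, Ch.~IX, Theorem~10.16, and \cite{conway1}) and only illustrated by the $8\times 8$ diagonal example. So your attempt cannot be compared with an in-paper argument; it has to stand on its own as a reconstruction of the standard Hahn--Hellinger proof. Within that proof, your cyclic case, the Zorn-plus-separability decomposition into cyclic reducing subspaces, and the sufficiency half of the uniqueness statement are all correct and standard.

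\paragraph{The gaps.} The step you dismiss as ``bookkeeping'' is where the existence proof actually lives, and as written it fails. Repeatedly choosing a vector of maximal spectral type in the orthogonal complement of what has already been used does \emph{not} guarantee that the cyclic subspaces exhaust $\HH$ after countably many steps. Concretely, let $\HH=\bigoplus_{n\ge 1}L^2(\mu)e_n$ with $\mu$ Lebesgue measure on $[0,1]$ and $A$ multiplication by $z$ in each coordinate. Choose $x_1=e_1$, then $x_2=\chi_{[0,1/2]}e_2+\chi_{(1/2,1]}e_3$, then $x_k=e_{k+1}$ for $k\ge 3$. Each $x_k$ has maximal spectral type (its scalar measure is $\mu$) in the relevant complement, yet $\bigoplus_k \HH_{x_k}$ misses the nonzero reducing subspace $L^2((1/2,1])e_2\oplus L^2([0,1/2])e_3$, so the ``first model'' you build need not be a model of $A$ at all. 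The known repairs are (a) interleave a countable dense sequence $\{f_n\}$, using the nontrivial lemma that for any $v$ in a reducing subspace $V$ there is a maximal-type vector $x\in V$ with $v\in \HH_x$, or (b) drop the decreasing chain altogether: take \emph{any} countable cyclic decomposition $\bigoplus_k L^2(\mu_k)$, put $\mu=\sum_k 2^{-k}\mu_k/\mu_k(\CC)$, let $E_k$ be the support of $d\mu_k/d\mu$, partition $\CC$ according to which set $S$ of indices is ``alive,'' and regroup the summands over sets of equal cardinality $|S|$; this produces the mutually singular measures $\mu_j$ directly. Second, the necessity half of the uniqueness claim is not proved but only located: asserting that an intertwining unitary ``must match the dimensions of spectral subspaces $\mu$-a.e.'' is exactly the statement requiring proof (one restricts to spectral subspaces $E(\omega)\HH$ over sets where the measures are equivalent and shows $N_\mu^{(n)}\cong N_\mu^{(m)}$ forces $n=m$, e.g.\ via a commutant or direct-integral dimension argument). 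As submitted, then, both the key existence step and the ``moreover'' clause of the theorem remain open in your write-up, even though the overall architecture is the right one.
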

Since the measures $\{\mu_j: \, j \in \N^*\}$ are mutually singular, we can define the measure $\mu=\sum_j\mu_j$ on $\CC$. A function  $\wi g\in \widetilde \HH$ is a vector of the form $(\wi{g}_j)_{j\in \N^*}$, where 
$\wi{g}_j$ is the restriction of $\wi g$ to $(L^2(\mu_j))^{(j)}$. 

\begin {remark}\label {ptwise}
Note that for every $1\le j< \infty$, $\wi g_j(z)$ is a finite dimensional vector in $l^2\{1,\dots,j\}$ and for $j=\infty$, $\wi g_\infty (z)$ is a vector in $l^2(\N)$. In order to simplify notation, we define  $\Om_j$ to be the set $ \{1,...,j\}$  and $\Om_\infty$ to be the set $\N$. Note that $l^2(\Om_j) \cong \CC^j$, for $j\in \N$, and $l^2(\Omega_\infty) = l^2(\N).$ For $j=0$ we define $l^2(\Omega_0)$ to be the trivial space $\{0\}$.

\end {remark}

An example to clarify the use of the Theorem above is given below.
\begin {example}
Let $A$ be the $8 \times 8$  diagonal matrix
 \[ A= \left( \begin{array}{ccc}
	\lambda_1 I_2& 0 &0 \\
	0 &  \lambda_2 I_3 &0\\
	0 & 0  & \lambda_3 I_3\\ 
	\end{array} \right)\]
where $\lambda_i\neq\lambda_j$ if $i\ne j$ and $I_j$ denotes the $j\times j$ identity matrix. For this case, the theorem above gives:  $\widetilde {\HH}= (L^2(\mu_2))^{(2)}\oplus (L^2(\mu_3))^{(3)}$, $\mu_2=\delta_{\lambda_1}$, $\mu_3=\delta_{\lambda_2}+\delta_{\lambda_3}$, where $\delta_{x}$ is the Dirac measure at $x$. If $g=(g_1,\dots,g_{8})^T$, then  $Ug=\tilde g=\big(\tilde g_j\big)$. In particular, $\wi g_3(\lambda_2)=\left( \begin{array}{c}
g_3\\
g_4\\
g_5
\end{array} \right)$, $\wi g _3(\lambda_3)=\left( \begin{array}{c}
g_6\\
g_7\\
g_8
\end{array} \right)$ and  $\wi g_3(z)=\vec 0$ for $z\ne \lambda_2, \lambda_3$ (in fact for $z\ne \lambda_2, \lambda_3$, $\wi g_3(z)$ can take any value since the measure $\mu_3$ is concentrated on  $\{\lambda_2, \lambda_3\} \subset \CC$). We have 
\begin {eqnarray*}\langle Uf,Ug\rangle&=&\int_{\CC} \langle \tilde f(z),\tilde g(z)\rangle d\mu(z)\\
&=&\int_{\CC} \langle \tilde f_2(z),\tilde g_2(z)\rangle d\mu_2(z)+\int_{\CC} \langle \tilde f_3(z),\tilde g_3(z)\rangle d\mu_3(z)\\
&=&\langle \tilde f_2(\lambda_1),\tilde g_2(\lambda_1)\rangle+ \langle \tilde f_3(\lambda_2),\tilde g_3(\lambda_2)\rangle+\langle \tilde f_3(\lambda_3),\tilde g_3(\lambda_3)\rangle\\
&=&\sum\limits_{j=1}^8f_j\overline{g}_j=\langle f, g \rangle.
\end{eqnarray*}
\end {example}	
	
	Since the measures $\mu_j$ in Theorem \ref {TAD1} are mutually singular, there are mutually disjoint Borel sets $\{\mathcal{E}_j\}$ such that $\mu_j$ is concentrated on $\mathcal{E}_j$ for every $1\leq j\leq \infty$. 
	
	The function $n:\CC\rightarrow \{1,2,\dots,\infty\}$ given by
	\[ n(z)=\begin{cases} 
	j, & z\in \mathcal{E}_j \\
	0, & otherwise 
	\end{cases}
	\]
	is called multiplicity function of the operator $A$.
	Thus every normal  operator is uniquely determined, up to a unitary equivalence,  by a pair $(n,[\mu])$ where $[\mu]$ is the class of  measures mutually singular with the compactly supported Borel measure $\mu$ and  $n:\CC\rightarrow \{1,2,\dots,\infty\}$ is a $\mu$ measurable function. 
 
\subsection {Action of normal operators  via infinite iterations}	
\label {ANOII}
	
In this section we present results from \cite {ACCMP15} about a system of infinite iterative action $\left\{A^ng_i\right\}_{ i\in I,\;n\ge0}$	 of a given normal operator $A\in  B(\HH)$ on a set of vectors $\G\subset \HH$.  Some of the results assume that $A$ is reductive, i.e., every invariant subspace $V$ for $A$ is also invariant for $A^*$.

	\begin{theorem} \label{mainthm} Let $A$ be a  normal operator on a Hilbert space $\HH$,  and let $\G$ be a countable set of vectors in $\HH$ such that  $\left\{A^ng\right\}_{ g \in \G,\;n\ge 0}$ is complete in $\HH$. Let  $\mu_\infty,\mu_1,\mu_2,\dots$ be the measures in  the  representation \eqref{repres}  of the operator $A$. Then for every  $1\leq j\leq\infty$ and $\mu_j$-a.e. $z$, the system of vectors  $\{\wi g_j(z)\}_{g\in \G}$ is complete in   $l^2\{\Om_j\}$.

 If in addition to being normal,  $A$ is also reductive,  then $\left\{A^ng\right\}_{ g\in \G,\;n\ge 0}$		being complete in $\HH$ is equivalent to $\{\wi g_j(z)\}_{g\in \G}$ being complete in   $l^2\{\Om_j\}$  $\mu_j$-a.e. $z$  for every  $1\leq j\leq\infty$.	
\end{theorem}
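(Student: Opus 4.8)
The plan is to pass to the spectral model of Theorem~\ref{decomp}. Applying the unitary $U$, the vector $A^n g$ becomes $z^n\wi g$, where $\wi g=(\wi g_j)_{j\in\N^*}\in\widetilde\HH$ and multiplication by $z$ acts componentwise. Thus $\{A^ng\}_{g\in\G,\,n\ge0}$ is complete in $\HH$ if and only if the only $\wi h\in\widetilde\HH$ with $\langle z^n\wi g,\wi h\rangle=0$ for all $g\in\G$ and $n\ge0$ is $\wi h=0$. Writing $\mu=\sum_j\mu_j$ and using that the $\mu_j$ live on the disjoint sets $\mathcal{E}_j$, the inner product unfolds as $\langle z^n\wi g,\wi h\rangle=\int_\CC z^n\varphi_g(z)\,d\mu(z)$, where $\varphi_g(z)=\langle\wi g(z),\wi h(z)\rangle$ is the pointwise (fiberwise) inner product in $l^2(\Om_{n(z)})$; note $\varphi_g\in L^1(\mu)$ by Cauchy--Schwarz. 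So the orthogonality relations are exactly the vanishing of the holomorphic moments $\int_\CC z^n\varphi_g\,d\mu=0$, $n\ge0$.

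For the first (unconditional) assertion I would argue by contraposition. Suppose that for some $j$ the fiber system $\{\wi g_j(z)\}_{g\in\G}$ fails to be complete in $l^2(\Om_j)$ on a set $S\subseteq\mathcal{E}_j$ with $\mu_j(S)>0$. The idea is to manufacture a nonzero $\wi h$ that is supported, in its $j$-th component, on $S$, with $\wi h_j(z)$ a nonzero vector orthogonal to every $\wi g_j(z)$. For such an $\wi h$ one has $\varphi_g\equiv0$, so all moments vanish and $\wi h\perp\{A^ng\}$, contradicting completeness. The real content---and the main obstacle in this direction---is the measurable selection: one must choose $z\mapsto v(z)\in l^2(\Om_j)$ measurably with $v(z)\neq0$, $\langle\wi g_j(z),v(z)\rangle=0$ for all $g\in\G$, and $\int_S\|v(z)\|^2\,d\mu_j<\infty$. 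I would obtain this from the orthogonal projections $P(z)$ onto $\big(\overline{\span\{\wi g_j(z):g\in\G\}}\big)^\perp$: the maps $z\mapsto\|P(z)e_\ell\|$ are measurable (they are monotone limits of finite-dimensional distances, given by ratios of Gram determinants of the countably many measurable fields $\wi g_j(\cdot)$), so $S=\bigcup_\ell\{\,\|P(z)e_\ell\|>0\,\}$ is measurable, some coordinate $e_{\ell_0}$ gives $P(z)e_{\ell_0}\neq0$ on a positive-measure subset, and taking $v(z)=P(z)e_{\ell_0}$ on the part of that subset where $\|v(z)\|$ is bounded below and above yields a legitimate $\wi h\in\widetilde\HH$. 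The finite-multiplicity fibers ($j<\infty$) reduce to ordinary linear algebra; the delicate case is $j=\infty$, where the infinite-dimensional measurable-field argument is needed.

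For the converse, under the extra hypothesis that $A$ is reductive, I would no longer need measurable selection. Set $\mathcal{M}=\overline{\span\{A^ng:g\in\G,\,n\ge0\}}$; it is plainly $A$-invariant, so by reductivity it is also $A^*$-invariant, and therefore $(A^*)^mA^ng\in\mathcal{M}$ for all $m,n\ge0$ and $g\in\G$. If $\wi h\perp\mathcal{M}$, then in the spectral model $0=\langle(A^*)^mA^ng,\wi h\rangle=\int_\CC\bar z^{\,m}z^{\,n}\varphi_g(z)\,d\mu(z)$ for all $m,n\ge0$: reductivity has upgraded the holomorphic moments of the first part to the full mixed moments. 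Since $\mu$ is compactly supported, Stone--Weierstrass makes the polynomials in $z,\bar z$ dense in $C(\supp\mu)$, so $\int p\,\varphi_g\,d\mu=0$ for every continuous $p$, and as $\varphi_g\in L^1(\mu)$ this forces $\varphi_g=0$ $\mu$-a.e. Because $\G$ is countable there is a single full-measure set on which $\langle\wi g_j(z),\wi h_j(z)\rangle=0$ simultaneously for all $g\in\G$; the assumed fiberwise completeness of $\{\wi g_j(z)\}_{g\in\G}$ in $l^2(\Om_j)$ then forces $\wi h_j(z)=0$ a.e. for every $j$, i.e. $\wi h=0$. Hence $\mathcal{M}=\HH$ and the system is complete.

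I expect the measurable selection in the first part (the $j=\infty$ fiber in particular) to be the genuinely technical point, while the converse is clean once reductivity is invoked; the one step there to treat with care is the passage $A^*\mathcal{M}\subseteq\mathcal{M}\Rightarrow(A^*)^mA^ng\in\mathcal{M}$ together with the $L^1$ density step. It is also worth recording why reductivity cannot be dropped: for multiplication by $z$ on $L^2$ of arclength on the unit circle (multiplicity one), the single generator $\wi g\equiv1$ is fiberwise complete in $\CC$ a.e., yet its holomorphic orbit spans only the Hardy space $H^2\subsetneq L^2$, so global completeness fails---exactly the non-reductive obstruction.
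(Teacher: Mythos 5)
The chapter itself states Theorem~\ref{mainthm} without proof---it is quoted from \cite{ACCMP15}---so there is no in-paper argument to compare against line by line; your proposal has to be judged against the approach of the cited source. On its own terms your proof is correct. The argument in \cite{ACCMP15} is organized around a structure fact for normal operators: the smallest subspace of $\widetilde\HH$ containing $\G$ that \emph{reduces} $N_\mu$ is the measurable range function $\{\tilde f:\ \tilde f_j(z)\in \overline{\span\{\tilde g_j(z):g\in\G\}}\ \ \mu_j\text{-a.e.}\}$; the first claim follows because the closed orbit span sits inside this reducing subspace, and the reductive converse because the closed orbit span then coincides with it. You replace this with two self-contained arguments: for necessity, a contrapositive via measurable selection of a nonzero orthogonal field $v(z)=P(z)e_{\ell_0}$ (in substance the same measurability work that underlies the structure fact), and for sufficiency, the observation that reductivity upgrades the holomorphic moment conditions to the full mixed moments $\int \bar z^{\,m} z^{\,n}\varphi_g\,d\mu=0$, after which Stone--Weierstrass on the compact set $\supp\mu$ plus Riesz representation force $\varphi_g=0$ $\mu$-a.e.; this finish is genuinely different and arguably more elementary, since it never invokes the classification of reducing subspaces. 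Two details you should tighten in the selection step: (i) measurability of $z\mapsto \mathrm{dist}\,(x,\span\{\tilde g_j(z):g\in\G\})$ is cleaner as a countable infimum of $\|x-\sum_i q_i\,\tilde g_{i,j}(z)\|$ over finite rational-coefficient combinations than via Gram-determinant ratios, whose denominators vanish when the vectors $\tilde g_j(z)$ are linearly dependent; (ii) you need \emph{vector-valued} measurability of $v$, i.e.\ of the coordinates $\la P(z)e_{\ell_0},e_k\ra$, which follows from the distance functions by polarization (using $\la P e_{\ell_0},e_k\ra=\la Pe_{\ell_0},Pe_k\ra$)---worth saying explicitly, since the norms $\|P(z)e_\ell\|$ alone are not enough. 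Your closing example (the bilateral shift on $L^2$ of the circle with $\tilde g\equiv 1$, whose orbit spans only $H^2$) is exactly the standard witness that reductivity cannot be dropped, and it matches the remark following Theorem~\ref{NRB}.
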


Although, the system of iteration $\left\{A^ng\right\}_{ g\in \G,\;n\ge 0}$ is complete, it is shown in \cite {ACCMP15} that it cannot be a basis for $\HH$. The obstruction is that $\left\{A^ng\right\}_{ g\in \G,\;n\ge 0}$ cannot be  minimal and complete at the same time. 
\begin {theorem} \label {NRB}
If  $A$ is a  normal operator on $\HH$ then, for any set of vectors  $\G \subset \HH$,    the system of iterates $\left\{A^ng\right\}_{g\in\G, n\ge 0}$ is not a basis for $ \HH $.
\end {theorem}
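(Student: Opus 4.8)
The plan is to show that the system can be neither minimal nor complete simultaneously, which settles the theorem since every basis (Schauder basis, under any enumeration) is at once complete and minimal. So I would argue by contradiction: assume $\{A^ng\}_{g\in\G,\,n\ge0}$ is a basis. Then it is complete and possesses a biorthogonal system $\{h_{g,n}\}_{g\in\G,\,n\ge0}\subset\HH$ with $\la A^ng,h_{g',n'}\ra=\delta_{g,g'}\delta_{n,n'}$ (the coordinate functionals of a Schauder basis are bounded, hence represented by vectors via Riesz). In particular $h_{g,0}\neq0$ for every $g$, because $\la g,h_{g,0}\ra=\la A^0g,h_{g,0}\ra=1$.

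The first step is to extract a shift relation from the operator itself. For any basis element, $\la A^ng,A^*h_{g',n'}\ra=\la A^{n+1}g,h_{g',n'}\ra=\delta_{g,g'}\delta_{n+1,n'}$, which is exactly $\la A^ng,h_{g',n'-1}\ra$ when $n'\ge1$. Since two vectors that agree in inner product against the complete family $\{A^ng\}$ must coincide, this forces $A^*h_{g,n}=h_{g,n-1}$ for $n\ge1$. For $n'=0$ the same comparison gives $\la A^ng,A^*h_{g,0}\ra=\delta_{g,g'}\delta_{n+1,0}=0$ for all $n$, so $A^*h_{g,0}=0$ by completeness. Iterating the shift relation yields the crucial identity $h_{g,0}=(A^*)^n h_{g,n}$ for every $n\ge1$; thus $h_{g,0}$ lies in $\ker A^*$ and, at the same time, in $\mathrm{ran}\,(A^*)^n$ for all $n$.

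The second step, where normality does the real work, is to show these two memberships force $h_{g,0}=0$. Let $E$ be the projection-valued spectral measure of $A$, so that $A^*=\int\bar z\,dE(z)$ and, by normality, $\ker A^*=\ker A=E(\{0\})\HH$. Since $h_{g,0}\in\ker A^*$, we have $E(\{0\})h_{g,0}=h_{g,0}$. On the other hand, applying $E(\{0\})$ to $h_{g,0}=(A^*)^n h_{g,n}=\int\bar z^{\,n}\,dE(z)\,h_{g,n}$ gives $E(\{0\})h_{g,0}=\bar 0^{\,n}E(\{0\})h_{g,n}=0$ for $n\ge1$. Hence $h_{g,0}=0$, contradicting $h_{g,0}\neq0$, and the proof is complete.

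The existence of the biorthogonal system and the ``agree on a complete set implies equal'' identifications are routine. I expect the single step carrying the genuine content — and the only place normality is used essentially — to be the spectral argument of the last paragraph: for a general (non-normal) operator a nonzero vector can perfectly well sit in $\ker A^*$ and in $\bigcap_n\mathrm{ran}\,(A^*)^n$ (the backward shift is the standard example), so the conclusion truly depends on $\ker A^*$ being the spectral subspace $E(\{0\})\HH$, which is precisely what the spectral theorem for normal operators supplies.
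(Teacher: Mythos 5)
Your proof is correct, and it follows exactly the route the paper indicates: the theorem is reduced to showing that for a normal operator the system $\{A^ng\}_{g\in\G,\,n\ge0}$ cannot be complete and minimal at the same time. Note that the paper itself contains no argument for this (it defers to the cited reference \cite{ACCMP15}), so your write-up is a genuine self-contained proof of the stated obstruction. The shift relations $A^*h_{g,n}=h_{g,n-1}$ and $A^*h_{g,0}=0$, extracted from biorthogonality plus completeness, are the right mechanism, and your spectral-measure step is valid. You can, however, shortcut the final paragraph: the case $n=1$ already suffices, because $h_{g,0}\in\ker A^*\cap\mathrm{ran}\,A^*$, while for a normal operator $\ker A^*=\ker A$ and $\overline{\mathrm{ran}\,A^*}=(\ker A)^{\perp}$, so $h_{g,0}$ is orthogonal to itself and hence zero; no projection-valued measure is needed. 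This leaner form also confirms your diagnosis of where normality is essential: for the right shift $S$ one has $\ker S^*=\mathrm{span}\{e_0\}$ while $\mathrm{ran}\,S^*$ is the whole space, so the two subspaces are far from orthogonal and no contradiction arises --- consistent with $\{S^ne_0\}_{n\ge0}$ being an orthonormal basis, as the paper's remark points out.
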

\begin {remark}
The normality assumption on $A$ is essential. For example, if $S$ is the right-shift operator on $\HH=l^2(\N)$, then $\{S^ne_0\}_{n\ge 0}$ is an orthonormal basis for $\HH=l^2(\N)$. In fact, it can be shown that, in a Hilbert space, a system  of vectors  $\HH$ $\{T^ng\}_{n\ge 0}$ generated by  $T\in B(\HH)$ and $g \in \HH$ is a Riesz basis if and only if is unitarily equivalent to the right-shift operator $S$ in $l^2(\N)$ \cite {IK16}.

\end{remark}
	
The fact that, for a normal operator $A$, $\left\{A^ng\right\}_{ g\in \G,\;n\ge 0}$ cannot be basis is that when it is complete, it must be redundant (since it is not minimal). But it is possible for such a sequence to be a frame. For example, the following theorem characterizes frames generated by the iterative action  of diagonalizable normal operators acting on a single vector $b$ \cite {ACMT14}.

\begin{theorem} \label {OnePointFrame}
Let  $\Lambda=\sum_j\lambda_jP_j$, acting on $l^2(\N)$, be such that $P_j$ have rank $1$ for all $j\in \N$, and let $b := \{b(k)\}_{k \in \N} \in l^2(\N)$. 
Then $\{\Lambda^lb: l=0,1,\dots\}$ is a frame if and only if
\begin{enumerate}
\item[i)] $|\lambda_k| < 1$  for all $k.$  
\item[ii)]  $|\lambda_k| \to 1$.
\item[iii)] $\{\lambda_k\}$ satisfies Carleson's condition 
\begin{equation}
\label{carleson-cond}
\inf_{n} \prod_{k\neq n} \frac{|\lambda_n-\lambda_k|}{|1-\bar{\lambda}_n\lambda_k|}\geq \delta.
\end{equation}
for some $\delta>0$.
\item[iv)] $b(k)=m_k\sqrt{1-|\lambda_k|^2}$ for some sequence $\{m_k\}$ satisfying $0<C_1\le |m_k| \le C_2< \infty$.
\end{enumerate}
\end{theorem}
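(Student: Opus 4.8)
My plan is to transfer the entire question to the Hardy space $H^2(\DD)$ and then read conditions (i)--(iv) off the classical theory of interpolating sequences. Since each $P_j$ has rank one, $\Lambda$ is the diagonal operator $\Lambda=\mathrm{diag}(\lambda_k)$, so $(\Lambda^l b)(k)=\lambda_k^l b(k)$ and, for every $f\in l^2(\N)$,
\begin{equation*}
\la f,\Lambda^l b\ra=\sum_k f(k)\,\overline{b(k)}\,\bar\lambda_k^{\,l}.
\end{equation*}
The starting point is to test the frame inequalities on the standard basis vectors, which gives $\sum_{l\ge0}|\la e_k,\Lambda^l b\ra|^2=|b(k)|^2\sum_{l\ge0}|\lambda_k|^{2l}$. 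First, $b(k)\neq0$ for every $k$, for otherwise $e_k$ would be orthogonal to the whole system, contradicting the lower bound; given $b(k)\neq0$, finiteness of the series (demanded by the upper/Bessel bound) forces $|\lambda_k|<1$, which is (i). Evaluating the geometric series then yields $C_1\le|b(k)|^2/(1-|\lambda_k|^2)\le C_2$, i.e.\ exactly (iv) with $m_k:=b(k)/\sqrt{1-|\lambda_k|^2}$. Finally, since $b\in\HH=l^2(\N)$ we have $b(k)\to0$, and combined with $|m_k|\ge\sqrt{C_1}$ this gives $1-|\lambda_k|^2=|b(k)|^2/|m_k|^2\to0$, which is (ii). Thus (i), (ii), (iv) are necessary and all fall out of the one-dimensional test vectors.

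The core of the argument is the identity, valid for general $f$, obtained by expanding the square and summing the geometric series in $l$:
\begin{equation*}
\sum_{l=0}^\infty|\la f,\Lambda^l b\ra|^2=\sum_{k,k'}\frac{f(k)\,\overline{b(k)}\,\overline{f(k')}\,b(k')}{1-\bar\lambda_k\lambda_{k'}}=\Big\|\sum_k f(k)\,\overline{b(k)}\,\kappa_{\lambda_k}\Big\|_{H^2}^2,
\end{equation*}
where $\kappa_w(z)=(1-\bar w z)^{-1}$ is the Szeg\H o reproducing kernel of $H^2(\DD)$ and the interchange of summations is justified once the upper frame bound is in force. Writing $\widetilde\kappa_{\lambda_k}=\sqrt{1-|\lambda_k|^2}\,\kappa_{\lambda_k}$ for the unit-norm normalized kernels and using (iv) in the form $\overline{b(k)}=\overline{m_k}\sqrt{1-|\lambda_k|^2}$, the right-hand side becomes $\big\|\sum_k f(k)\,\overline{m_k}\,\widetilde\kappa_{\lambda_k}\big\|_{H^2}^2$.

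Because $\sqrt{C_1}\le|m_k|\le\sqrt{C_2}$, the diagonal map $f(k)\mapsto c_k:=f(k)\overline{m_k}$ is an isomorphism of $l^2(\N)$ onto itself, carrying $\|f\|^2$ to a quantity comparable to $\sum_k|c_k|^2$. Hence the frame inequalities for $\{\Lambda^l b\}$ are equivalent to
\begin{equation*}
A\sum_k|c_k|^2\;\le\;\Big\|\sum_k c_k\,\widetilde\kappa_{\lambda_k}\Big\|_{H^2}^2\;\le\;B\sum_k|c_k|^2\qquad(c\in l^2(\N)),
\end{equation*}
that is, to the statement that $\{\widetilde\kappa_{\lambda_k}\}$ is a Riesz sequence (a Riesz basis for its closed span) in $H^2(\DD)$. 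At this point I invoke the Carleson interpolation theorem together with the Shapiro--Shields characterization: $\{\widetilde\kappa_{\lambda_k}\}$ is a Riesz sequence if and only if $\{\lambda_k\}$ is an interpolating sequence, which holds if and only if the Carleson product condition (iii) is satisfied. This supplies the outstanding direction of both implications and closes the equivalence.

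The main obstacle is precisely this last step: the passage between the Riesz-sequence property of normalized reproducing kernels and the Carleson product condition (iii) is the deep part, and I would cite it rather than reprove it. Everything else is bookkeeping, the only delicate points being the convergence and interchange in the core identity (controlled by the Bessel bound) and the fact that an interpolating sequence is automatically a Blaschke sequence; the latter guarantees $b=\sum_k m_k\sqrt{1-|\lambda_k|^2}\,e_k\in l^2(\N)$ in the sufficiency direction and is also what makes (ii) a consequence of (i) and (iii).
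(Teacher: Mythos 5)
Your proposal is correct and follows essentially the same route as the actual proof of this result: the chapter states Theorem \ref{OnePointFrame} without proof, quoting it from \cite{ACMT14}, and the argument there is precisely yours --- necessity of (i), (ii), (iv) by testing the frame inequalities on the basis vectors $e_k$, then identification of $\sum_{l\ge 0}|\la f,\Lambda^l b\ra|^2$ with $\bigl\|\sum_k f(k)\overline{b(k)}\,(1-\bar\lambda_k z)^{-1}\bigr\|_{H^2(\DD)}^2$, so that the frame property becomes the Riesz-sequence property of the normalized Szeg\H{o} kernels $\sqrt{1-|\lambda_k|^2}\,(1-\bar\lambda_k z)^{-1}$, which the Carleson/Shapiro--Shields theorem converts into condition (iii). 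The points you flag (justifying the interchange of sums on finitely supported vectors and extending by density, and using the Blaschke condition implied by (iii) to get $b\in l^2(\N)$ and (ii) in the sufficiency direction) are handled the same way in the cited proof, so there is nothing to correct.
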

In the previous theorem, the spectrum lies inside the unit disk $D_1$. Moreover, the spectrum concentrates near its boundary $S_1$. These facts can be generalized for normal operators \cite {ACCMP15}.
\begin{theorem} \label{framecond} Let $A$ be a normal operator on an infinite dimensional  Hilbert space $\HH$ and $ \G $ a system of vectors in $ \HH $. 	
\begin{enumerate}
		\item If  $\{A^n g\}_{g\in \G, \;n\geq 0}$ is complete in $\HH$  and for every $g\in \G$ the system  $\{A^n g\}_{n\geq 0}$ is Bessel in $\HH$, then	$\mu\left(D_1^c\right)=0$ and $\mu|_{S_1}$ is absolutely continuous with respect to  arc-length measure (Lebesgue measure) on $S_1$.
		\item \label{condb}	If  $|\G|<\infty$ and  $\{A^n g\}_{g\in \G, \;n\geq 0}$ satisfies the lower frame bound then, for every $0<\epsilon<1$, $\mu\left(D_{1-\epsilon}^c\right)>0$, where $D_{1-\epsilon}$ is the closed disc of radius ${1-\epsilon}$.
	\end{enumerate}
\end{theorem}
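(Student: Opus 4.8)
The plan is to pass to the spectral model of Theorem~\ref{decomp}, reduce both assertions to statements about the scalar spectral measures $\mu_g$, and then transfer the conclusions from the individual $\mu_g$ to $\mu$ by means of the completeness criterion of Theorem~\ref{mainthm}. Throughout I identify $A$ with $N_\mu$ on $\widetilde\HH$, write $\mu=\sum_j\mu_j$, and record that $d\mu_g=\norm{\tilde g(z)}^2\,d\mu$ is the scalar spectral measure of $g$, so that the Borel functional calculus supplies an isometry $\varphi\mapsto\varphi(A)g$ of $L^2(\mu_g)$ onto the cyclic subspace of $g$, under which $z^n\mapsto A^n g$ and $\la\varphi(A)g,A^n g\ra=\la\varphi,z^n\ra_{L^2(\mu_g)}$. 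Testing the Bessel inequality for $\{A^n g\}_{n\ge0}$ against the vectors $f=\varphi(A)g$ then shows that $\{z^n\}_{n\ge 0}$ is a Bessel sequence in $L^2(\mu_g)$, with the same bound $B_g$. This reduction is the common starting point for part~(1).

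For part~(1) I would first derive $\mu(D_1^c)=0$. Taking $f=A^n g$ in the Bessel inequality gives $\norm{A^n g}^2\le B_g$, i.e. $\int_\CC|z|^{2n}\,d\mu_g\le B_g$ for every $n$; monotone convergence then forces $\mu_g(\{|z|>1\})=0$, so $\tilde g(z)=0$ for $\mu$-a.e.\ $z\in D_1^c$ and every $g\in\G$. The absolute continuity on $S_1$ is the main obstacle. Write the boundary part in its Lebesgue decomposition $\mu_g|_{S_1}=\sigma_{ac}+\sigma_s$ and suppose $\sigma_s\neq0$, carried by a Lebesgue-null set $E\subset S_1$. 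Using $h=\mathbf{1}_E\in L^2(\mu_g)$ as a test function, $\la h,z^n\ra_{L^2(\mu_g)}=\int_E e^{-in\theta}\,d\sigma_s=\hat\sigma_s(n)$, so the Bessel bound would yield $\sum_{n\ge0}\abs{\hat\sigma_s(n)}^2<\infty$. Since $\sigma_s$ is a positive (hence conjugate-symmetric) measure, this forces $\sum_{n\in\Z}\abs{\hat\sigma_s(n)}^2<\infty$, whereupon Riesz--Fischer together with uniqueness of Fourier--Stieltjes coefficients identifies $\sigma_s$ with an $L^2(d\theta)$ density, contradicting its singularity. Thus $\sigma_s=0$ and $\mu_g|_{S_1}$ is absolutely continuous for every $g\in\G$.

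It remains to transfer these facts from the $\mu_g$ to $\mu$, which is where completeness enters. If $\mu(D_1^c)>0$, or if $\mu|_{S_1}$ had a singular part carried by a Lebesgue-null set $E\subset S_1$ with $\mu(E)>0$, then from $d\mu_g=\norm{\tilde g(z)}^2\,d\mu$ and the already-established identities $\mu_g(D_1^c)=0$ (respectively $\mu_g(E)=0$, by absolute continuity) we would get $\tilde g(z)=0$ for $\mu$-a.e.\ $z$ in that set, simultaneously for all the countably many $g\in\G$. Choosing an index $j$ with $\mu_j$ positive on the set, this makes $\{\tilde g_j(z)\}_{g\in\G}$ fail to be complete in the nontrivial space $l^2(\Om_j)$ on a set of positive $\mu_j$-measure, contradicting Theorem~\ref{mainthm}. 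Hence $\mu(D_1^c)=0$ and $\mu|_{S_1}$ is absolutely continuous, proving part~(1).

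For part~(2) I would argue by contradiction through a trace estimate. Suppose $\mu(D_{1-\epsilon}^c)=0$ for some $0<\epsilon<1$; since $A$ is normal this means $\norm{A}=\mathrm{ess\,sup}_\mu|z|\le 1-\epsilon=:r<1$, so $\norm{A^n}\le r^n$. Consider the positive frame operator $Sf=\sum_{g\in\G}\sum_{n\ge0}\la f,A^n g\ra A^n g$. Evaluating its trace against an orthonormal basis gives $\operatorname{tr}(S)=\sum_{g\in\G}\sum_{n\ge0}\norm{A^n g}^2\le\big(\sum_{g\in\G}\norm{g}^2\big)\sum_{n\ge0}r^{2n}<\infty$, where the finiteness uses crucially that $\abs{\G}<\infty$. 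Hence $S$ is trace class, in particular compact; but the lower frame bound says $\la Sf,f\ra\ge C\norm{f}^2$, i.e.\ $S\ge C\,I$, which is impossible for a compact operator on the infinite-dimensional space $\HH$. This contradiction yields $\mu(D_{1-\epsilon}^c)>0$ for every $\epsilon\in(0,1)$, completing the proof.
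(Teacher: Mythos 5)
Your proposal is correct, but note that the paper itself contains no proof of Theorem~\ref{framecond}: it is stated as a result imported from \cite{ACCMP15}, so the only in-paper material to measure it against are the related theorems of Section~\ref{NRGBO}. Your part~(1) is a complete argument: passing to the cyclic subspace of each $g$ turns the per-vector Bessel hypothesis into the statement that $\{z^n\}_{n\geq 0}$ is Bessel in $L^2(\mu_g)$; the moment bound $\int |z|^{2n}\,d\mu_g\leq B_g$ then kills $\mu_g$ outside $D_1$, and the square-summability of the Fourier--Stieltjes coefficients of any putative singular part on $S_1$ (together with conjugate symmetry of a positive measure, Riesz--Fischer, and uniqueness of Fourier--Stieltjes coefficients) kills singularity; finally the transfer from the scalar measures $\mu_g$ to $\mu$ correctly invokes only the forward direction of Theorem~\ref{mainthm}, which does not require reductivity, together with countability of $\G$. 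For part~(2) you take a genuinely different route from what the paper does prove: your statement, via $\mu(D_{1-\epsilon}^c)=0\Rightarrow \|A\|\leq 1-\epsilon$, is for normal $A$ equivalent to the unnumbered theorem of Section~\ref{NRGBO} asserting that $\dim\HH=\infty$, $|\G|<\infty$ and the lower frame bound force $\|A\|\geq 1$. The paper's proof of that theorem is elementary: for each $N$ it picks a unit vector $f$ orthogonal to the finitely many vectors $\{A^n g: g\in\G,\ 0\leq n\leq N\}$ and lets the geometric tail $\sum_{n>N}\|A\|^{2n}$ defeat the lower bound. You instead note that $\|A\|\leq 1-\epsilon$ gives $\sum_{g,n}\|A^n g\|^2<\infty$, so the frame operator $S$ is trace class, hence compact, contradicting $S\geq CI$ on an infinite-dimensional space. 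Both arguments are valid; the paper's needs nothing beyond the existence of a vector orthogonal to a finite set, while yours isolates the stronger structural fact that, under the lower frame bound in infinite dimensions, the frame operator of $\{A^n g\}_{g\in\G,\,n\geq 0}$ can never be compact.
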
 
It can be proved that if $\mu\left(D_1^c\right)=0$ and $\mu|_{S_1}$ is absolutely continuous with respect to  arc-length measure on $S_1$ then there exists a set $G\subset\HH$ such that $\{A^n g\}_{g\in \G, \;n\geq 0}$ is complete and Bessel system in $\HH$. Other developments on this theme can be found in \cite {PF16}.

\begin{corollary}
		If  for a normal operator $A\in B(\HH)$ in an infinite dimensional space $\HH$ the system of vectors  $\{A^n g\}_{g\in \G, \;n\geq 0}$ with $|\G|<\infty$ is a frame, then $A$ is unitarily equivalent to an operator $\Lambda=\sum_j\lambda_jP_j$ where $ P_j $ are projections such that $\dim P_j\leq |\G|$. In particular, if $|\G|=1$, then $\lambda_j$ satisfy conditions $i),ii)$ in  Theorem \ref{OnePointFrame}.
	\end{corollary}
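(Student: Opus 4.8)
The plan is to read the structure of $A$ off the two theorems already at our disposal, and then to upgrade the resulting ``finite multiplicity, spectrum in the disk'' picture to genuine discreteness of the spectrum by exploiting an operator equation satisfied by the frame operator.

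First I would pass, via Theorem \ref{decomp}, to the model $A=N_\mu=N^{(\infty)}_{\mu_\infty}\oplus N_{\mu_1}\oplus N^{(2)}_{\mu_2}\oplus\cdots$ acting as multiplication by $z$, so that each $g\in\G$ becomes $\wi g=(\wi g_j)_j$. A frame is in particular complete, so Theorem \ref{mainthm} gives, for every $1\le j\le\infty$ and $\mu_j$-a.e. $z$, the completeness of $\{\wi g_j(z)\}_{g\in\G}$ in $l^2(\Om_j)$. A set of at most $|\G|$ vectors can be complete in $l^2(\Om_j)$ only when $\dim l^2(\Om_j)=j\le|\G|$; hence $\mu_\infty=0$ and $\mu_j=0$ for $j>|\G|$, so that $A=\bigoplus_{j=1}^{|\G|}N^{(j)}_{\mu_j}$ and the multiplicity function satisfies $n(z)\le|\G|$ for $\mu$-a.e. $z$. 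Next, a frame is Bessel, hence so is each $\{A^ng\}_{n\ge0}$; Theorem \ref{framecond}(1) then forces $\mu(D_1^c)=0$ and $\mu|_{S_1}$ absolutely continuous with respect to arc length, while Theorem \ref{framecond}(2) (here $|\G|<\infty$) forces $\mu(D_{1-\epsilon}^c)>0$ for every $\epsilon\in(0,1)$. At this stage $A$ is normal with spectrum in $\overline{D_1}$, multiplicities $\le|\G|$, and mass accumulating at $S_1$; what remains --- and this is the whole difficulty --- is to show that $\mu$ is purely atomic. Completeness alone does not suffice here: a continuous measure carried by $\overline{D_1}\cap\R$ already has $\{z^n\}_{n\ge0}$ complete in $L^2(\mu)$, so it is the \emph{lower} frame bound that must collapse the spectrum.

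For the atomicity I would use the Lyapunov/Stein identity satisfied by the frame operator $S=\sum_{g\in\G}\sum_{n\ge0}\langle\,\cdot\,,A^ng\rangle A^ng$. A one-line reindexing gives
$$S-ASA^*=\sum_{g\in\G}g\otimes g=:G,$$
a positive operator of rank at most $|\G|$. Since $S$ is bounded and boundedly invertible (frame property), conjugating by $S^{-1/2}$ turns this into $TT^*=I-S^{-1/2}GS^{-1/2}$, where $T=S^{-1/2}AS^{1/2}$ is similar to $A$; thus $T$ is a contraction whose defect $I-TT^*$ has rank at most $|\G|$. The operator $T$ is therefore a completely non-unitary contraction with finite defect indices which is similar to the normal operator $A$. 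The plan is to invoke the Sz.-Nagy--Foia\c{s} functional model: a c.n.u. contraction of class $C_{\cdot 0}$ with finite defect is unitarily equivalent to a (vector-valued) model operator whose spectrum is the zero set of its characteristic function $\Theta$, and requiring $T$ to be similar to a \emph{normal} operator forces $\Theta$ to be a Blaschke--Potapov product whose zeros satisfy the Carleson condition \eqref{carleson-cond}. The spectrum of $A$ is then exactly this Carleson--Blaschke zero set --- a discrete subset of the open disk accumulating only on $S_1$ --- and the multiplicities are the orders of the zeros, bounded by the defect rank $|\G|$.

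Putting the two halves together, $A$ is unitarily equivalent to $\Lambda=\sum_j\lambda_jP_j$ with $\{\lambda_j\}$ the discrete eigenvalues and $\dim P_j\le|\G|$, as claimed. When $|\G|=1$ the defect is $1$, so every $P_j$ is rank one and $\Lambda$ is precisely an operator of the type treated in Theorem \ref{OnePointFrame}; applying that theorem to the frame $\{\Lambda^lb\}$ yields in particular conditions $i)$ and $ii)$, namely $|\lambda_k|<1$ for all $k$ and $|\lambda_k|\to1$. The main obstacle is exactly the discreteness step of the third paragraph: the Bessel and completeness conclusions of Theorems \ref{mainthm} and \ref{framecond} do not by themselves exclude a continuous part of $\mu$, and verifying the model-theoretic hypotheses (pure contractivity of $\Theta$, membership of $T$ in class $C_{\cdot 0}$, and the passage from normality to a Blaschke product) is the genuine technical heart of the argument.
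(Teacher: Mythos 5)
Your reduction to the multiplication model, the multiplicity bound $n(z)\le|\G|$ via Theorem \ref{mainthm}, and the Lyapunov identity $S-ASA^*=\sum_{g\in\G}g\otimes g$ (hence that $T=S^{-1/2}AS^{1/2}$ is a contraction with defect rank at most $|\G|$) are all correct. But the proof has a genuine gap exactly where you locate it: the discreteness of the spectrum. Your plan rests on the assertion that a completely non-unitary contraction of class $C_{\cdot 0}$ with finite defect which is merely \emph{similar} to a normal operator must have characteristic function equal to a Blaschke--Potapov product with Carleson zeros, with spectrum equal to the (discrete) zero set. That is not an off-the-shelf citable theorem in the generality you need (vector-valued, finite defect, similarity rather than unitary equivalence); you would have to prove it, and you concede yourself that verifying complete non-unitarity, the $C_{\cdot 0}$ membership, and the ``normality forces Blaschke'' implication is the technical heart of the matter. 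A proof whose heart is an unproven appeal to the Sz.-Nagy--Foia\c{s} model is not yet a proof: as written, nothing in your argument excludes a continuous part of $\mu$ carried by an annulus $\{1-\epsilon<|z|<1\}$.

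The paper closes this gap with a short, elementary localization built from results you already quoted. For $\rho<1$ let $\widetilde V_\rho=\{\tilde f\in\widetilde\HH:\ \supp \tilde f\subseteq D_\rho\}$. This subspace reduces the multiplication operator $N_\mu=UAU^*$, so the orthogonal projection $P_{\widetilde V_\rho}$ commutes with $N_\mu$; therefore the projection of the frame $\{N_\mu^n\wi g\}_{g\in\G,\,n\ge0}$ onto $\widetilde V_\rho$ coincides with the iterated system $\{(N_\mu|_{\widetilde V_\rho})^n P_{\widetilde V_\rho}\wi g\}_{g\in\G,\,n\ge0}$, and, being the orthogonal projection of a frame, it is a frame for $\widetilde V_\rho$. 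The restricted operator is normal with spectrum inside $D_\rho$, so its spectral measure gives zero mass to $D_{1-\epsilon}^c$ whenever $\epsilon<1-\rho$; part (2) of Theorem \ref{framecond}, in contrapositive, then forces $\dim\widetilde V_\rho<\infty$. Hence the spectrum of $A$ inside every $D_\rho$, $\rho<1$, is finite. Combined with $\mu(D_1^c)=0$ from part (1) of Theorem \ref{framecond}, and with $\mu(S_1)=0$ (apply Corollary \ref{nounitfrzame} to the unitary restriction of $N_\mu$ to the reducing subspace of functions supported on $S_1$), this shows $\mu$ is purely atomic with atoms accumulating only at $S_1$, i.e. $UAU^*=\sum_j\lambda_jP_j$; the bound $\dim P_j\le|\G|$ follows from Theorem \ref{TAD1} (or from your completeness argument), and the case $|\G|=1$ from Theorem \ref{OnePointFrame}. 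So the discreteness you tried to extract from model theory follows, with no machinery at all, from applying Theorem \ref{framecond}(2) not to $A$ itself but to its restrictions to the reducing subspaces $\widetilde V_\rho$.
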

	\begin {proof} Define the subspace $\widetilde V_\rho$ of $\widetilde \HH$ to be $\widetilde V_\rho=\{\tilde f: \supp \tilde f \subseteq D_\rho\}$. The restriction  of $UAU^*$  to $\widetilde V_\rho$ is normal with its spectrum equal to the part of the spectrum of $A$ inside $D_\rho$.   If we iterate the $z$-multiplication operator on the projections $\G_\rho=P_{\widetilde V_\rho}\G$ of the vectors in $\G$ we  get a frame for $\widetilde V_\rho$ hence, from part 2), $\widetilde V_\rho$ must be finite dimensional. That implies the spectrum of $A$ is finite inside every $D_\rho$ with $\rho<1$. We also know from Part (1) of Theorem \ref {framecond} that $\mu(D_1^c)=0$. Furthermore, from Corollary 
	\ref {nounitfrzame} below, $\mu(S_1)=0$.  Thus,  $UAU^*$ has the form $\Lambda=\sum_j\lambda_jP_j$. The fact that $\dim P_j\leq |\G|$ follows from Theorem \ref {TAD1}. The rest follows from Theorem \ref{OnePointFrame}. \qed
	\end {proof}

\subsection{New results for general bounded operators}	
\label {NRGBO}

This section is devoted to the study of the iterative action of general bounded operators in $B(\HH)$. 
	\begin{theorem}\label{noframeany}
			 If for an operator  $A\in B(\HH)$  there exists a set of vectors $\G $  in $ \HH $ such that $\{A^n g\}_{g\in \G, \;n\geq 0}$ is  a frame in $\HH$ then   for every $f\in \HH$, $(A^*)^nf\to 0$ as $n\to \infty$.

	\end{theorem}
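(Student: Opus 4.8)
The plan is to translate the frame hypothesis into a statement about $A^*$ and then exploit the convergence of the frame series together with the lower frame bound applied to iterated vectors. First I would use the adjoint identity $\la f, A^n g\ra = \la (A^*)^n f, g\ra$ to rewrite the frame inequalities. Writing $B=A^*$ and letting $0<C_1\le C_2<\infty$ be the frame bounds, the hypothesis becomes
\[
C_1\|h\|^2 \le \sum_{g\in\G}\sum_{n\ge 0} \bigl|\la B^n h, g\ra\bigr|^2 \le C_2\|h\|^2
\]
for every $h\in\HH$. In particular, for a fixed $f\in\HH$ the double series $S(f):=\sum_{g\in\G}\sum_{n\ge 0}\bigl|\la B^n f,g\ra\bigr|^2$ is finite, bounded by $C_2\|f\|^2$; since all its terms are nonnegative, it may be summed over the pairs $(g,n)$ in any order.

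The key idea is to feed the iterated vector $B^n f$ back into the lower frame bound. Applying the left inequality with $h=B^n f$ gives
\[
C_1\,\|B^n f\|^2 \le \sum_{g\in\G}\sum_{m\ge 0} \bigl|\la B^{m}(B^n f), g\ra\bigr|^2 = \sum_{g\in\G}\sum_{k\ge n} \bigl|\la B^{k} f, g\ra\bigr|^2,
\]
where I have reindexed by $k=m+n$. The right-hand side is precisely the tail of the convergent series $S(f)$.

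The final step is to observe that, because $S(f)<\infty$, its tail over the index set $\{(g,k): k\ge n\}$ tends to $0$ as $n\to\infty$. Hence $\|B^n f\|^2 \le C_1^{-1}\cdot(\text{tail}) \to 0$, which is exactly $(A^*)^n f\to 0$. I expect no serious obstacle here: the only points requiring care are the reordering of the nonnegative double sum (justified since every term is nonnegative) and the recognition that the ``shifted'' frame sum for $B^n f$ is a genuine tail of the single fixed convergent series $S(f)$. The conceptual heart of the argument is this self-improving use of the lower frame bound on the orbit vectors themselves, which converts mere summability of the frame coefficients into decay of the iterates $(A^*)^n f$.
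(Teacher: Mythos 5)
Your proof is correct and is essentially identical to the paper's: both arguments rewrite the frame sum for the iterate $(A^*)^n f$ as a tail of the convergent series $\sum_{g}\sum_{k}|\la f, A^k g\ra|^2$ (convergent by the upper frame bound) and then apply the lower frame bound to that iterate. The only difference is notational (your $B=A^*$ and reindexing $k=m+n$ versus the paper's direct computation with $\la (A^*)^m f, A^n g\ra$), so there is nothing to add.
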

	\begin{proof} Suppose, for some $\{g\}_{g\in G} $, $\{A^n g\}_{g\in \G, \;n\geq 0}$ is a frame with frame bounds $B_1$ and $ B_2 $.
		Let $f \in \HH$.  Then  for any $m \in \Z $ we  have
		
		\begin {eqnarray}
		\label {UP}
		\sum_{g\in \G}\sum_{n= 0}^{\infty}|\la(A^*)^mf,A^ng\ra|^2&=&\sum_{g\in \G}\sum_{n= 0}^{\infty}|\la f,A^{n+m}g\ra|^2\\
		&=&\sum_{g\in \G}\sum_{n= m}^{\infty}|\la f,A^{n}g\ra|^2. \nonumber
		\end {eqnarray}
		Since $\sum_{g\in \G}\sum_{n= 0}^{\infty}|\la f,A^{n}g\ra|^2\le B_2\|f\|^2$, we conclude that $\sum_{n= m}^{\infty}\sum_{g\in \G}|\la f,A^{n}g\ra|^2\to 0$ as $m\to \infty$. Thus, from \eqref{UP}, we get that 
		$\sum_{g\in \G}\sum_{n= 0}^{\infty}|\la(A^*)^mf,A^ng\ra|^2\to 0$ as $m\to \infty$. Using the lower frame inequality, we get

				$$B_1\|(A^*)^mf\|\leq \sum_{g\in \G}\sum_{n=0}^{\infty}|\la(A^*)^mf,A^ng\ra|^2.$$
		Since the right side of the inequality tends to zero as $m$ tends to infinity we get that  $(A^*)^mf\to 0$ as $m\to \infty$.\qed
	\end{proof}

	\begin{corollary}
	\label {nounitfrzame}
		For any unitary operator  $A:\HH\to \HH$ and any set of vectors $\G\subset \HH $, $\{A^n g\}_{g\in \G, \;n\geq 0}$ is not a frame in $\HH$.
	\end{corollary}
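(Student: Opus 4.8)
The plan is to derive the result directly from Theorem~\ref{noframeany} via its contrapositive. Suppose, toward a contradiction, that for some $\G\subset\HH$ the system $\{A^n g\}_{g\in\G,\,n\ge 0}$ were a frame in $\HH$. Then Theorem~\ref{noframeany} would force $(A^*)^n f\to 0$ as $n\to\infty$ for \emph{every} $f\in\HH$. I would then exhibit a vector for which this fails.

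The key observation is that a unitary operator satisfies $A^*A=AA^*=I$, so $A^*$ is itself unitary and in particular norm-preserving: $\|A^* f\|=\|f\|$ for all $f\in\HH$. Iterating gives $\|(A^*)^n f\|=\|f\|$ for every $n\ge 0$. Consequently $(A^*)^n f\to 0$ can hold only if $\|f\|=0$, that is $f=0$. Assuming $\HH\neq\{0\}$, choose any nonzero $f$; then $(A^*)^n f\not\to 0$, which contradicts the conclusion just extracted from Theorem~\ref{noframeany}. Hence no such frame can exist.

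I expect there to be essentially no obstacle in this argument: the entire analytic content is carried by Theorem~\ref{noframeany}, and the only extra ingredient is the isometry property $\|A^* f\|=\|f\|$, which is immediate from unitarity. The lone point worth flagging is the degenerate case $\HH=\{0\}$, where the statement holds vacuously and there is nothing to prove; otherwise the nonzero vector $f$ needed above is guaranteed to exist.
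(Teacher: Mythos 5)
Your proof is correct and is essentially the paper's own argument: the paper states this corollary immediately after Theorem~\ref{noframeany} with no separate proof, treating it as an immediate consequence. The details you supply---$A^*$ is unitary, hence $\|(A^*)^n f\|=\|f\|$ for all $n$, so $(A^*)^n f\not\to 0$ for any nonzero $f$, contradicting the conclusion of Theorem~\ref{noframeany}---are exactly the intended reasoning.
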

	
	If for every $f\in \HH$, $(A^*)^nf\to 0$ as $n\to \infty$, then we can get the following  existence theorem of frames for $\HH$ from  iterations.
	
	\begin{theorem}
If $ A $ is a contraction (i.e., $ \|A\|\leq 1 $), and  for every $f\in \HH$, $(A^*)^nf\to 0$ as $n\to \infty$,  then we can choose $\G\subseteq \HH $  such that $\{A^n g\}_{g\in \G, \;n\geq 0}$ is a tight frame.
	\end{theorem}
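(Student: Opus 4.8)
The plan is to construct $\G$ explicitly from the \emph{defect operator} of the contraction $A$. Since $\|A\|\le 1$ we also have $\|A^*\|\le 1$, so $AA^*\le I$ and the operator $D:=(I-AA^*)^{1/2}$ is a well-defined positive contraction on $\HH$. Its role is to capture, at each iterate, exactly the amount of energy lost when $A^*$ is applied, and this is what will produce a Parseval frame.

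First I would establish a telescoping energy identity. For any $f\in\HH$ and $n\ge 0$, writing $h=(A^*)^n f$ and using $\|A^*h\|^2=\la AA^*h,h\ra$ together with the self-adjointness of $D$, one gets
\begin{equation*}
\|(A^*)^n f\|^2-\|(A^*)^{n+1}f\|^2=\la (I-AA^*)h,h\ra=\|D(A^*)^n f\|^2 .
\end{equation*}
Summing over $0\le n\le N-1$, the left-hand side telescopes, yielding
\begin{equation*}
\|f\|^2-\|(A^*)^N f\|^2=\sum_{n=0}^{N-1}\|D(A^*)^n f\|^2 .
\end{equation*}
The contraction hypothesis is precisely what makes each summand nonnegative. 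I would then let $N\to\infty$ and invoke the stability hypothesis $(A^*)^N f\to 0$ to obtain the exact Plancherel-type formula $\|f\|^2=\sum_{n\ge 0}\|D(A^*)^n f\|^2$, valid for every $f\in\HH$.

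Next I would fix any orthonormal basis $\{e_k\}_{k}$ of $\HH$ (countable, since $\HH$ is separable) and expand each defect term against it, moving $D$ and then $(A^*)^n$ across the inner product:
\begin{equation*}
\|D(A^*)^n f\|^2=\sum_k|\la D(A^*)^n f,e_k\ra|^2=\sum_k|\la f,A^n D e_k\ra|^2 .
\end{equation*}
Substituting into the previous identity and interchanging the two nonnegative sums gives $\|f\|^2=\sum_k\sum_{n\ge 0}|\la f,A^n(De_k)\ra|^2$. Hence the choice $\G:=\{De_k\}_k$ makes $\{A^n g\}_{g\in\G,\,n\ge 0}$ a Parseval frame, which is in particular a tight frame with both bounds equal to $1$.

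I do not expect a genuine obstacle here. The only points requiring care are the interchange of the sums over $n$ and over $k$, justified by Tonelli's theorem because all terms are nonnegative, and the passage to the limit $N\to\infty$, which uses exactly the hypothesis $(A^*)^n f\to 0$. The conceptual crux is simply recognizing that $D=(I-AA^*)^{1/2}$ is the correct generator: it encodes the per-step energy loss, so that summing over all iterates recovers the full norm.
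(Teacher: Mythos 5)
Your proof is correct and is essentially the same as the paper's: both rest on the defect operator $D=(I-AA^*)^{1/2}$, the same telescoping identity $\|f\|^2-\|(A^*)^{N}f\|^2=\sum_{n=0}^{N-1}\|D(A^*)^nf\|^2$ (which uses $(A^*)^Nf\to 0$ in the limit), and a Parseval expansion against an orthonormal basis, yielding $\G$ as the $D$-image of that basis. The only immaterial difference is that the paper takes the orthonormal basis inside $\overline{D\HH}$ rather than in all of $\HH$, which merely makes the generating set $\G$ slightly smaller; since $D$ is self-adjoint, both choices give the same Parseval identity.
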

	
		\begin{remark}
	The system we find in this case  is not very useful since the initial system $\G$ is 'too large' (it is complete in $ \HH $ in some cases).  Moreover, the  condition $\|A\|\leq 1$  is not necessary  for the existence of a frame with iterations. For example, we can take nilpotent operators with large operator norm for which there are frames with iterations.  
	\end{remark}

	\begin{proof}
 Suppose for any $ f\in\HH $, $(A^*)^nf\to 0$ as $n\to \infty$ and $ \|A\|\leq 1 $. Let $D=(I-AA^*)^{\frac{1}{2}}$ and $\V=cl (D\HH)$. Let $\{h\}_{h\in \mathcal I}$ be an orthonormal basis for $ \V $. Then
 
 \begin {eqnarray*} \sum_{n=0}^{m}\sum_{h\in \mathcal I}|<f,A^nDh>|^2&=&\sum_{n=0}^{m}\sum_{h\in \mathcal I}|<D(A^*)^nf,h>|^2\\
 &=&\sum_{n=0}^{m}\|D(A^*)^nf\|^2\\
 &=&\sum_{n=0}^{m}<D^2(A^*)^nf,(A^*)^nf>\\
 &=&\sum_{n=0}^{m}<(I-AA^*)(A^*)^nf,(A^*)^nf>\\
 &=&\|f\|^2-\|(A^*)^{m+1}f\|.
 \end {eqnarray*}
 Taking limits as $m\to \infty$ and using the fact that $(A^*)^mf\to 0$ we get from the  identity above that 
 
 $$\sum_{n=0}^{\infty}\sum_{h\in \mathcal I}|<f,A^nDh>|^2=\|f\|^2.$$
 Therefore  the system of vectors $\G=\{g=Dh: h \in \mathcal I\}$ is a tight frame for $\HH$.
 \qed
	\end{proof}
	
	\begin{theorem}
		If $\dim \HH=\infty$, $|\G|<\infty$, and $\{A^n g\}_{g \in \G, \;n\geq 0}$  satisfies the lower frame bound, then $\|A\|\geq 1$.
	\end{theorem}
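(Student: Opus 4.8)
The plan is to argue by contradiction: suppose $\|A\|<1$ and derive a violation of the lower frame inequality. The intuition is that when $\|A\|<1$ the iterates $A^ng$ shrink geometrically, so the ``tail'' of the frame sum carries a vanishingly small fraction of the total energy; the lower bound must therefore already be supplied by the first few levels of iteration, i.e.\ by a \emph{finite} subfamily of vectors. But a finite family cannot satisfy a lower frame bound in an infinite-dimensional space, because one can always test against a vector orthogonal to all of them.

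Concretely, write the lower frame bound as $B_1\|f\|^2\le \sum_{g\in\G}\sum_{n\ge 0}\abs{\la f,A^ng\ra}^2$ for all $f\in\HH$, and set $C=\sum_{g\in\G}\norm{g}^2$, which is finite since $\abs{\G}<\infty$. First I would estimate the tail: using $\la f,A^ng\ra=\la (A^*)^nf,g\ra$, Cauchy--Schwarz, and $\norm{(A^*)^nf}\le\|A^*\|^n\norm f=\|A\|^n\norm f$, one gets
$$\sum_{g\in\G}\sum_{n\ge N}\abs{\la f,A^ng\ra}^2\le C\norm f^2\sum_{n\ge N}\|A\|^{2n}=C\,\frac{\|A\|^{2N}}{1-\|A\|^2}\,\norm f^2 .$$
Since $\|A\|<1$, I can choose $N$ large enough that the constant $C\|A\|^{2N}/(1-\|A\|^2)$ is less than $B_1/2$. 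Subtracting this tail estimate from the lower frame inequality then yields, for every $f\in\HH$,
$$\sum_{g\in\G}\sum_{n=0}^{N-1}\abs{\la f,A^ng\ra}^2\ge \frac{B_1}{2}\norm f^2 .$$

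The final step is to observe that the left-hand side involves only the finite collection $\{A^ng:\ g\in\G,\ 0\le n<N\}$, which spans a subspace $W$ of dimension at most $N\abs{\G}<\infty$. Because $\dim\HH=\infty$, the orthogonal complement of $W$ is nontrivial, so there is some $f\neq 0$ with $\la f,A^ng\ra=0$ for all these vectors; for this $f$ the displayed inequality reads $0\ge \tfrac{B_1}{2}\norm f^2>0$, a contradiction. Hence $\|A\|\ge 1$. I expect the only genuine point requiring care is the tail estimate and the choice of truncation level $N$ — making sure the geometric decay really lets the finite head absorb the full lower bound; once the problem is reduced to a finite family, the infinite-dimensionality of $\HH$ closes the argument immediately.
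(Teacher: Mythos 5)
Your proof is correct and follows essentially the same route as the paper: argue by contradiction from $\|A\|<1$, bound the tail $\sum_{n\ge N}$ by the geometric series $\sum_{n\ge N}\|A\|^{2n}$ via Cauchy--Schwarz, and use a vector orthogonal to the finite family $\{A^ng:\ g\in\G,\ 0\le n<N\}$, which exists since $\dim\HH=\infty$. The only (cosmetic) difference is that you fix $N$ once so the head absorbs $B_1/2$, while the paper chooses a unit vector $f_N$ orthogonal to the head for each $N$ and lets $N\to\infty$ to contradict the uniform lower bound directly.
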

	\begin{proof}
		Suppose $\|A\|< 1$. Since $\{g\}_{g \in \G} $ is finite and $\dim(\HH)=\infty$, for any fixed $N$ there exists a vector $f\in \HH$ with $\|f\|=1$ such that $<A^n g,f>=0$, for every $g\in \G$ and  $0\leq n\leq N$. Then 
		$$\sum_{g \in \G}\sum_{n\geq 0}|<A^n g,f>|^2=\sum_{g \in \G}\sum_{n= N}^{\infty}|<A^n g,f>|^2\leq\sum_{g \in \G}\|g\| \sum_{n= N}^{\infty}\|A\|^{2n}\to 0$$
		as $N\to \infty$ hence the lower frame bound cannot hold.\qed
	\end{proof}
	
	\begin{corollary}
		Let $\{A^n g\}_{g \in \G, \;n\geq 0}$ with $|\G|<\infty$ satisfies the lower frame bound. Then for any  coinvariant subspace $ \V \subset \HH$ of $A$ with $\|P_\V AP_\V\|<1$ we have that $\dim(\V)<\infty$.
	\end{corollary}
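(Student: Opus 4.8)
The plan is to reduce the statement to the preceding theorem by compressing $A$ to $\V$. Write $B = P_\V A P_\V$ and regard it as a bounded operator on $\V$; its adjoint (as an operator on $\V$) is $P_\V A^* P_\V$. The hypothesis that $\V$ is coinvariant for $A$ means precisely that $A^*\V \subseteq \V$, so for $f \in \V$ we have $A^* f \in \V$ and therefore $B^* f = P_\V A^* P_\V f = P_\V A^* f = A^* f$. In other words $B^*$ is simply the restriction of $A^*$ to the $A^*$-invariant subspace $\V$, and by induction $(B^*)^n f = (A^*)^n f$ for every $f \in \V$ and $n \ge 0$.

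The key step is to show that the compressed iterative system $\{B^n (P_\V g)\}_{g\in\G,\,n\ge 0}$ inherits the lower frame bound on $\V$. For $f \in \V$ I would compute, using $\la A^n g, f\ra = \la g, (A^*)^n f\ra$ together with $(A^*)^n f \in \V$,
\[
\la f, A^n g\ra = \la (A^*)^n f, g\ra = \la (A^*)^n f, P_\V g\ra = \la (B^*)^n f, P_\V g\ra = \la f, B^n P_\V g\ra,
\]
where the second equality holds because $(A^*)^n f \in \V$ is orthogonal to the component of $g$ lying in $\V^\perp$, and the last two use the step above and the adjoint relation on $\V$. Summing squared moduli and invoking the lower frame bound for $\{A^n g\}_{g\in\G,\,n\ge0}$ in $\HH$, which holds in particular for $f \in \V$, yields $C_1\|f\|^2 \le \sum_{g\in\G}\sum_{n\ge0} |\la f, B^n P_\V g\ra|^2$ for all $f\in\V$. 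Hence $\{B^n(P_\V g)\}_{g\in\G,\,n\ge0}$ satisfies the lower frame bound in $\V$, generated by the set $\{P_\V g : g\in\G\}$ of cardinality at most $|\G| < \infty$.

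To finish I would argue by contradiction. Suppose $\dim\V = \infty$. Then $B$ is a bounded operator on an infinite-dimensional Hilbert space, the finite set $\{P_\V g : g\in\G\}$ generates an iterative system satisfying the lower frame bound, and the preceding theorem forces $\|B\| \ge 1$. But $P_\V A P_\V$ vanishes on $\V^\perp$, so $\|P_\V A P_\V\|$ equals the norm of $B$ as an operator on $\V$, whence $\|B\| = \|P_\V A P_\V\| < 1$ by hypothesis, a contradiction. Therefore $\dim\V < \infty$.

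The only genuine subtlety, and the step I would check most carefully, is the coinvariance identity $(A^*)^n f = (B^*)^n f$ on $\V$ and the consequent equality $\la f, A^n g\ra = \la f, B^n P_\V g\ra$; this is where the assumption that $\V$ is coinvariant (rather than merely invariant) is essential, since it guarantees that the iterates $(A^*)^n f$ never leave $\V$ and thus that compressing to $\V$ loses no information about the moments of $f$. Once that identity is in hand, the corollary is a direct application of the previous theorem together with the elementary fact that $\|P_\V A P_\V\| = \|B\|$.
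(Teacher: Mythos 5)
Your proof is correct and takes essentially the same route as the paper: both arguments show that the compression $P_\V A P_\V$ inherits the lower frame bound on $\V$ (you derive the key identity $\la f, A^n g\ra = \la f, (P_\V A P_\V)^n P_\V g\ra$ via $A^*$-invariance of $\V$, while the paper uses the equivalent formulation $P_\V A = P_\V A P_\V$ and hence $P_\V A^n = P_\V A^n P_\V$), and then both apply the preceding theorem to force $\|P_\V A P_\V\|\geq 1$ when $\dim(\V)=\infty$. Your version simply spells out details the paper leaves implicit, such as replacing $g$ by $P_\V g$ and identifying the norm of the compression as an operator on $\V$.
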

	\begin{proof} $ \V $ is coinvariant for $ A $ is equivalent to
		$$P_\V A=P_\V AP_\V.$$
		It follows that $P_\V A^n=P_\V A^nP_\V$. Hence, if $\{A^n g\}_{g\in \G, \;n\geq 0}$ satisfies the lower frame inequality in $\HH$, then $\{(P_\V AP_\V)^n g\}_{g\in \G, \;n\geq 0}$ also satisfies the lower frame inequality for $ \V $ and hence from the previous theorem if $\dim(\V)=\infty$, then $\|P_\V AP_\V\|\geq1$.\qed
	\end{proof}

\section{Related work and concluding remarks}

There are several  features that are particular to the present work: In the system of  iterations $ \{(A^*)^ng\}_{g\in \G, 0\le n < L(g)} $ that we considered in this chapter, we let $L(g)=\infty$ for all $g \in \G$. This setting implies strong constraints on the spectrum of $A$ when we further require that the system is a Bessel system, a frame, etc. Since in finite dimensional spaces every finite spanning set is a frame, and since for fixed $g$, if $K> \dim (\HH)$, then the set $ \{(A^*)^ng\}_{g\in \G, 0\le n \le K} $  is always linearly dependent, it does not make sense to let $L(g)>  \dim (\HH)+1$. In fact, the finite dimensional problem has first been studied \cite {ADK13} in which $L(g)$ is a constant for all $g\in\G$ and is as small as possible in some sense. 

\begin{acknowledgement}
This work has been partially supported by NSF/DMS grant  1322099. Akram Aldroubi would like to thank Charlotte Avant and Barbara Corley for their attendance to the comfort and entertainment during the preparation of this manuscript.
\end{acknowledgement} 

\bibliographystyle{amsplain}
\bibliography{refers}

\providecommand{\bysame}{\leavevmode\hbox to3em{\hrulefill}\thinspace}
\providecommand{\MR}{\relax\ifhmode\unskip\space\fi MR }
\providecommand{\MRhref}[2]{%
  \href{http://www.ams.org/mathscinet-getitem?mr=#1}{#2}
}
\providecommand{\href}[2]{#2}
\begin{thebibliography}{10}

\bibitem{AADP13}
Roza Aceska, Akram Aldroubi, Jacqueline Davis, and Armenak Petrosyan,
  \emph{Dynamical sampling in shift invariant spaces}, Commutative and
  Noncommutative Harmonic Analysis and Applications (Azita Mayeli, Alex
  Iosevich, Palle E.~T. Jorgensen, and Gestur {\'O}lafsson, eds.), Contemp.
  Math., vol. 603, Amer. Math. Soc., Providence, RI, 2013, pp.~139--148.

\bibitem{ABK08}
Akram Aldroubi, Anatoly Baskakov, and Ilya Krishtal, \emph{Slanted matrices,
  {B}anach frames, and sampling}, J. Funct. Anal. \textbf{255} (2008), no.~7,
  1667--1691. \MR{2442078 (2010a:46059)}

\bibitem{ACCMP15}
Akram Aldroubi, Carlos Cabrelli, Ahmet~F. {\c C}akmak, Ursula Molter, and
  Petrosyan Armenak, \emph{Iterative actions of normal operators},
  arXiv:1602.04527 (2016).

\bibitem{ACMT14}
Akram Aldroubi, Carlos Cabrelli, Ursula Molter, and Sui Tang, \emph{Dynamical
  sampling}, Appl. Comput. Harmon. Anal. (in press, 2016), ArXiv:1409.8333.

\bibitem{ADK13}
Akram Aldroubi, Jacqueline Davis, and Ilya Krishtal, \emph{Dynamical sampling:
  time-space trade-off}, Appl. Comput. Harmon. Anal. \textbf{34} (2013), no.~3,
  495--503. \MR{3027915}

\bibitem{ADK15}
\bysame, \emph{Exact reconstruction of signals in evolutionary systems via
  spatiotemporal trade-off}, J. Fourier Anal. Appl. \textbf{21} (2015), 11--31.

\bibitem{BJ02}
Ola Bratteli and Palle Jorgensen, \emph{Wavelets through a looking glass},
  Applied and Numerical Harmonic Analysis, Birkh\"auser Boston Inc., Boston,
  MA, 2002, The world of the spectrum. \MR{1913212 (2003i:42001)}

\bibitem{CKL08}
Peter~G. Casazza, Gitta Kutyniok, and Shidong Li, \emph{Fusion frames and
  distributed processing}, Appl. Comput. Harmon. Anal. \textbf{25} (2008),
  no.~1, 114--132. \MR{2419707 (2009d:42094)}

\bibitem{conway1}
John~B. Conway, \emph{Subnormal operators}, Research Notes in Mathematics,
  vol.~51, Pitman (Advanced Publishing Program), Boston, Mass.-London, 1981.
  \MR{634507 (83i:47030)}

\bibitem{Con96}
\bysame, \emph{A course in functional analysis}, second ed., Graduate Texts in
  Mathematics, vol.~96, Springer-Verlag, New York, 1990. \MR{1070713}

\bibitem{CM11}
Bradley Currey and Azita Mayeli, \emph{Gabor fields and wavelet sets for the
  {H}eisenberg group}, Monatsh. Math. \textbf{162} (2011), no.~2, 119--142.
  \MR{2769882 (2012d:42069)}

\bibitem{D92}
Ingrid Daubechies, \emph{Ten lectures on wavelets}, CBMS-NSF Regional
  Conference Series in Applied Mathematics, vol.~61, Society for Industrial and
  Applied Mathematics (SIAM), Philadelphia, PA, 1992. \MR{1162107}

\bibitem{FS10}
Brendan Farrell and Thomas Strohmer, \emph{Inverse-closedness of a {B}anach
  algebra of integral operators on the {H}eisenberg group}, J. Operator Theory
  \textbf{64} (2010), no.~1, 189--205. \MR{2669435}

\bibitem{G04}
Karlheinz Gr{\"o}chenig, \emph{Localization of frames, {B}anach frames, and the
  invertibility of the frame operator}, J. Fourier Anal. Appl. \textbf{10}
  (2004), no.~2, 105--132. \MR{2054304 (2005f:42086)}

\bibitem{GL04}
Karlheinz Gr{\"o}chenig and Michael Leinert, \emph{Wiener's lemma for twisted
  convolution and {G}abor frames}, J. Amer. Math. Soc. \textbf{17} (2004),
  no.~1, 1--18 (electronic). \MR{2015328 (2004m:42037)}

\bibitem{GRUV15}
Karlheinz Gr{\"o}chenig, Jos{\'e}~Luis Romero, Jayakrishnan Unnikrishnan, and
  Martin Vetterli, \emph{On minimal trajectories for mobile sampling of
  bandlimited fields}, Appl. Comput. Harmon. Anal. \textbf{39} (2015), no.~3,
  487--510. \MR{3398946}

\bibitem{HW96}
Eugenio Hern{\'a}ndez and Guido Weiss, \emph{A first course on wavelets},
  Studies in Advanced Mathematics, CRC Press, Boca Raton, FL, 1996, With a
  foreword by Yves Meyer. \MR{1408902 (97i:42015)}

\bibitem{HRLV10}
Ali Hormati, Olivier Roy, Yue~M. Lu, and Martin Vetterli, \emph{Distributed
  sampling of signals linked by sparse filtering: Theory and applications},
  Signal Processing, IEEE Transactions on \textbf{58} (2010), no.~3, 1095
  --1109.

\bibitem{IK16}
Illia Karabash, \emph{Unpublished notes}, Private Communication (2016).

\bibitem{LV09}
Y.M. Lu and M.~Vetterli, \emph{Spatial super-resolution of a diffusion field by
  temporal oversampling in sensor networks}, Acoustics, Speech and Signal
  Processing, 2009. ICASSP 2009. IEEE International Conference on, april 2009,
  pp.~2249--2252.

\bibitem{LDV11}
Yue~M. Lu, Pier-Luigi Dragotti, and Martin Vetterli, \emph{Localization of
  diffusive sources using spatiotemporal measurements}, Communication, Control,
  and Computing (Allerton), 2011 49th Annual Allerton Conference on, Sept 2011,
  pp.~1072--1076.

\bibitem{M98}
St{\'e}phane Mallat, \emph{A wavelet tour of signal processing}, Academic Press
  Inc., San Diego, CA, 1998. \MR{1614527 (99m:94012)}

\bibitem{N11}
Zuhair~M. Nashed, \emph{Inverse problems, moment problems, signal processing:
  un menage a trois}, Mathematics in science and technology, World Sci. Publ.,
  Hackensack, NJ, 2011, pp.~2--19. \MR{2883419}

\bibitem{OS04}
Gestur {\'O}lafsson and Darrin Speegle, \emph{Wavelets, wavelet sets, and
  linear actions on {$\Bbb R^n$}}, Wavelets, frames and operator theory,
  Contemp. Math., vol. 345, Amer. Math. Soc., Providence, RI, 2004,
  pp.~253--281. \MR{2066833 (2005h:42075)}

\bibitem{Pes13}
Isaac~Z. Pesenson, \emph{Multiresolution analysis on compact {R}iemannian
  manifolds}, Multiscale analysis and nonlinear dynamics, Rev. Nonlinear Dyn.
  Complex., Wiley-VCH, Weinheim, 2013, pp.~65--82. \MR{3221687}

\bibitem{Pes15}
\bysame, \emph{Sampling, splines and frames on compact manifolds}, GEM Int. J.
  Geomath. \textbf{6} (2015), no.~1, 43--81. \MR{3322489}

\bibitem{AP15}
A.~Petrosyan, \emph{Dynamical sampling with moving devices}, Proc. of the
  Yerevan State Univ., Phys. and Math. Sci. (2015), no.~1, 31--35.

\bibitem{PF16}
Frederich Philipp, \emph{Unpublished notes}, Private Communication (2016).

\bibitem{RCLV11}
Juri Ranieri, Amina Chebira, Yue~M. Lu, and Martin Vetterli, \emph{Sampling and
  reconstructing diffusion fields with localized sources}, Acoustics, Speech
  and Signal Processing (ICASSP), 2011 IEEE International Conference on, May
  2011, pp.~4016 --4019.

\bibitem{RMG12}
G{\"u}nter Reise, Gerald Matz, and Karlheinz Gr{\"o}chenig, \emph{Distributed
  field reconstruction in wireless sensor networks based on hybrid
  shift-invariant spaces}, IEEE Trans. Signal Process. \textbf{60} (2012),
  no.~10, 5426--5439. \MR{2979004}

\bibitem{SN96}
Gilbert Strang and Truong Nguyen, \emph{Wavelets and filter banks},
  Wellesley-Cambridge Press, Wellesley, MA, 1996. \MR{1411910 (98b:94003)}

\bibitem{S08ACM}
Qiyu Sun, \emph{Frames in spaces with finite rate of innovation}, Adv. Comput.
  Math. \textbf{28} (2008), no.~4, 301--329. \MR{2390281 (2009c:42093)}

\end{thebibliography}
\end{document}